\numberwithin{equation}{section}
\theoremstyle{plain}
\newtheorem{theorem}{Theorem}[section]
\newtheorem{Proposition}[theorem]{Proposition}
\newtheorem{conjecture}[theorem]{Conjecture}
\newtheorem{definition}[theorem]{Definition}
\theoremstyle{remark}
\newtheorem{remark}[theorem]{Remark}
\newcommand{\base} {M}
\newcommand{\basehs} {\base_{h,s}}
\newcommand{\basepoint} {m_0}
\newcommand{\basepointone} {\hat\basepoint^1}
\newcommand{\basepointtwo} {\hat\basepoint^2}
\newcommand{\basepointthree} {\hat\basepoint^3}
\newcommand{\boundaryReifenberg} {\edges}
\newcommand{\HHH} {\mathcal H}
\newcommand{\covering} {Y}
\newcommand{\coveringabstract} {Y_\mathrm{abs}}
\newcommand{\ucovering} {\widehat{\covering}}
\newcommand{\ccovering} {\overline{\covering}}
\newcommand{\umin} {u_\mathrm{min}}
\newcommand{\newgroup} {K} 
\newcommand{\particularsurface} {\Sigma}
\newcommand{\particularsurfaceSteiner} {\Gamma}
\newcommand{\genericsurface} {\Sigma}
\newcommand{\genericsurfaceSteiner} {\Gamma}
\newcommand{\pione} {\pi_1(\base)}
\newcommand{\superficieconica} {\Sigma_{\rm c}}
\newcommand{\superficieskew} {\Sigma_{\rm skew}}
\newcommand{\superficiesteiner} {\Sigma_{\rm steiner}}
\newcommand{\superficieminima} {\Sigma_\mathrm{min}}
\newcommand{\superficiminime} {{\mathcal S}_\mathrm{min}}
\newcommand{\surfaceReifenberg} {K}
\newcommand{\psurfaceReifenberg} {V}
\newcommand{\surfacewetting} {\mathfrak{S}}
\newcommand{\edges} {S}
\newcommand{\looops} {C}
\newcommand{\loopone} {\looops_1}
\newcommand{\looptwo} {\looops_2}
\newcommand{\iwires} {\looops_{12}}
\newcommand{\shorts} {S}
\newcommand{\shortone} {\shorts_1}
\newcommand{\shorttwo} {\shorts_2}
\newcommand{\longs} {L}
\newcommand{\longone} {\longs_1}
\newcommand{\longtwo} {\longs_2}
\newcommand{\longthree} {\longs_3}
\newcommand{\longfour} {\longs_4}
\newcommand{\propertyone}{(P1)}
\newcommand{\propertytwo}{(P2)}
\newcommand{\domain} {D}
\newcommand{\domainzF} {\domain_0(\FFF)}
\newcommand{\domainF} {\domain(\FFF)}
\newcommand{\domainzFb} {\domain^{\base}_0(\FFF)}
\newcommand{\domainFb} {\domain^{\base}(\FFF)}
\newcommand{\DD} {\mathbb D}
\newcommand{\FFF} {\mathcal F}
\newcommand{\R} {\mathbb R}
\newcommand{\Sbb} {\mathbb S}
\newcommand{\SSS} {\mathcal S}
\newcommand{\prj} {p}
\newcommand{\prjabstract} {p_\mathrm{abs}}
\newcommand{\minF} {\FFF_\mathrm{min}}
\newcommand{\minFb} {\minF(\base)}
\newcommand{\minFbhs} {\minF(\basehs)}
\newcommand{\projectionofjumpofu} {\prj(J_u)}
\newcommand{\tetrahedron}{\Wedge}
\newcommand{\Wedge}{W}
\newcommand{\Rectangle} {R}
\newcommand{\wiredist} {\rho_\infty}
\newcommand{\wiredistM} {\wiredist(\base)}
\newcommand{\word} {w}
\newcommand{\Z} {\mathbb Z}
\definecolor{mygray}{rgb}{0.92,0.92,0.92}
\newif\ifdraft
\title{Triple covers and a non-simply connected surface spanning
an elongated tetrahedron and beating the cone}
\author{Giovanni Bellettini}
\address{Dipartimento di Ingegneria dell'Informazione e Scienze Matematiche, Universit\`a di Siena, 53100 Siena, Italy,
and International Centre for Theoretical Physics ICTP, 
Mathematics Section, 34151 Trieste, Italy
}
\email{bellettini@diism.unisi.it}
\author{Maurizio Paolini}
\address{Dipartimento di Matematica e Fisica, Universit\`a Cattolica del Sacro Cuore, 25121 Brescia, Italy}
\email{maurizio.paolini@unicatt.it}
\author{Franco Pasquarelli}
\address{Dipartimento di Matematica e Fisica, Universit\`a Cattolica del Sacro Cuore, 25121 Brescia, Italy}
\email{franco.pasquarelli@unicatt.it}
\date{\today}
\begin{document}

\thanks{}

\begin{abstract}
By using a suitable triple cover we show how to  possibly  model the construction of
a minimal  surface with positive genus  spanning all six edges of a  tetrahedron, 
working in the space of 
BV functions and interpreting the film as the  boundary of a Caccioppoli set
in the covering space.
After a question raised by R. Hardt in the late 1980's, 
it seems  common opinion that an area-minimizing surface of this sort does not exist for a regular
tetrahedron, although a proof of this fact is still missing.
In this paper we  show that there exists a surface of positive genus spanning the boundary of an elongated
tetrahedron and having area strictly less than the area of the conic surface.
\end{abstract}

\maketitle


\section{Introduction}\label{sec:intro}
Finding a soap film that spans all six edges of a regular tetrahedron different from
the cone  of Figure \ref{fig:morgan} (left) is an
intriguing problem.
It was discussed by Lawlor and Morgan in \cite[p. 57, and fig. 1.1.1]{LaMo:94}, where a sketch of a possible
soap film of positive genus is shown,
 based on an idea of R. Hardt\footnote{
A sketch of this surface was reportedly found
by F. Morgan in R. Hardt's office during a visit at Stanford
around 1988.
F. Morgan and J. Taylor tried to find crude estimates to compare the two minimizers without
success.
Many years later R. Hardt himself told R. Huff about the problem, who then came out with the
results that can be found in \cite{Hu:11}.
}; such a surface is 
here reproduced in Figure \ref{fig:morgan} (right)%
\footnote{
The picture itself is a computer generated image obtained by J. Taylor using the 
\texttt{surface evolver} of K. Brakke.
}.
The same picture was subsequently included in the book \cite[fig. 11.3.2]{Mo:08}.

The cone constructed from the center of the solid spanning the six edges 
of the regular tetrahedron (Figure \ref{fig:morgan} left)
has been
proved to be area-minimizing\footnote{
That is, $({\bf M}, 0, \delta)$-minimal
in the sense of F.J. Almgren \cite{Al:76}.} if,
roughly speaking, one imposes on the competitors the extra constraint that they 
divide the
regular tetrahedron in four regions, one per face \cite[Theorem IV.6]{Ta:76}, see also \cite{LaMo:94}.
It corresponds to the actual shape that a real soap film attains when dipping a
tetrahedral frame in soapy water;
it includes a $T$-singularity at the center, where four triple lines ($Y$-singularities) converge
from the four vertices satisfying the local constraints of an area-minimizing surface \cite{Ta:76}.

However, it is an open question whether a non-simply connected film,
e.g. with ``tunnels''
 connecting pairs of faces, could beat the cone;
 Figure \ref{fig:morgan}
(right)
 shows a theoretically feasible configuration of such a minimizing film.

Although it seems a common opinion, based both on physical experiments and theoretical reasons, that
such a surface does not actually exist (see e.g. \cite{Hu:11}), to our best knowledge such a
question still remains open.

Generalizations of the problem, for instance considering deformations of the
tetrahedron with the addition of zig-zags \cite{Hu:11}, allows on the contrary to construct
a surface with the required topology that at least satisfies all local properties of a minimal
film.
Another generalization that could lead to interesting minimizers consists in considering
anistotropic surface energies \cite{LaMo:94}.

The surface with positive genus\footnote{
This surface contains triple curves and boundaries,
in this context we could not find a recognized definition of genus.
However, if we remove the two flat portions, each bounded by a side of the tetrahedron
and one of the two triple curves (operation that can be performed by deformation retraction, hence without
changing the topological type), we end up with a surface without triple lines and bounded by a skew
quadrilateral.
We can then apply the formula $\chi = 2 - 2g - b$ where $\chi$ is the Euler characteristic
($\chi = -1$ in our case),
$g$ is the genus and $b = 1$ is the number of components of the boundary, obtaining $g=1$.
Consistently, the retracted surface can be deformed into a disk with a handle, or equivalently
into a puntured torus.}
depicted in Figure \ref{fig:morgan} includes two triple curves
(curves where three sheets of the surface meet at $120^\circ$) and no quadruple points.
Furthermore it sports two tunnels, one clearly visible that
allows to traverse the  tetrahedron entering from the front face and exiting from the back.
The other hole is located on the other side of the film and allows to traverse the tetrahedron
entering from the right lateral face  and exiting from  the bottom face (without crossing
the soap film). Figure \ref{fig:surfacefilm} in Section \ref{sec:comparison} helps to figure out
the topological structure.

\begin{figure}
\includegraphics[width=0.55\textwidth]{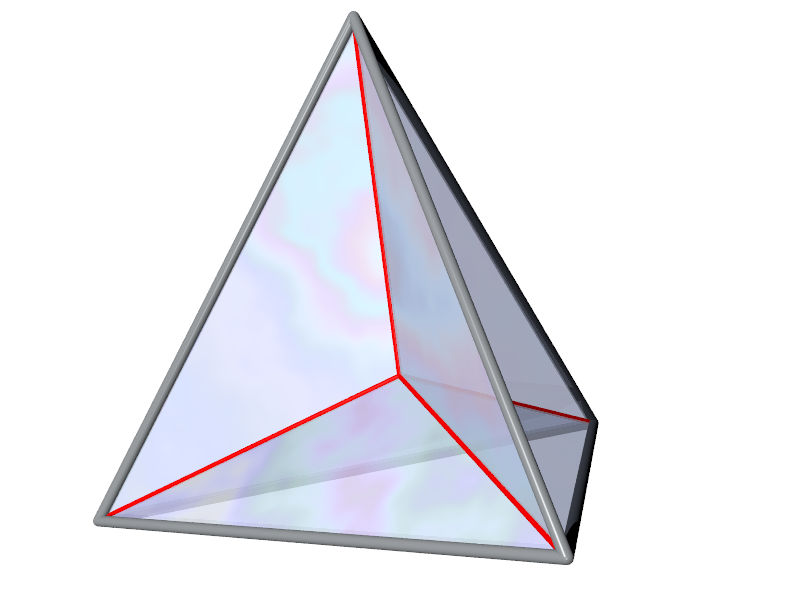}
\includegraphics[width=0.43\textwidth]{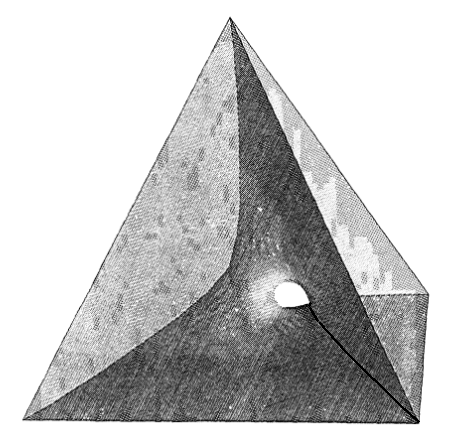}
\caption{\small{Left: the classic conical minimal film 
spanning a regular tetrahedron.
Right: a slightly retouched version of \cite[fig. 1.1.1]{LaMo:94}.
In turn it is an enhanced version of \cite[fig. 11.3.2]{Mo:08}.
Each triple curve ``passes through'' one of the two tunnels. 
}}
\label{fig:morgan}
\end{figure}

Our first result (Section \ref{sec:covering}, on the basis also of 
the computation of the fundamental group in Section \ref{sec:fundamental}) 
is the construction of a covering space of degree $3$ 
of the complement of the one-skeleton of a tetrahedron,
following the lines of  
\cite{Br:95} (see also \cite{AmBePa:15}),
that is compatible with Figure \ref{fig:morgan},  right.
Using covering spaces allows to treat in a neat way situations that seem hard to
model using other approaches; see for instance
Section \ref{sec:reifenberg}, where we compare our approach
with the Reifenberg approach.

A small portion of the soap film somehow behaves like a sort of ``portal'' to a
parallel (liquid) universe.
More precisely, each point in $\R^3$, after removal of a suitable set of curves
(obtaining the so-called base space)
has two other counterparts, for a total of three copies of the base space
that are actually to be interpreted (locally) as three distinct sheets of a
cover.
Globally the picture is more interesting, since the covering space is constructed
in such a way that when travelling along a closed curve in the base space, the
``lifted'' point might find itself on a different sheet of the same fiber.
This can be used as a trick to overcome the problem in treating the soap film
as transition between air and liquid.
Since the liquid part has infinitesimal thickness, this
would lead to the superposition of two layers, one corresponding to the air-liquid transition and the other to the
transition back from liquid to air.
For this reason an approach based e.g. on $BV$ functions or Caccioppoli sets would lead to a
liquid phase of measure zero and miss completely the two
superposed layers.
Using the covering space overcomes the problem by adding a ``fake'' big set of
liquid fase, lying in a different sheet of the same fiber with an entry point
corresponding to one face of the soap film and an exit point (reached travelling
along suitable closed curves in the base space) from the other face 
in the same position.
A phase parameter $u$ defined in the covering space is then defined with values in
$\{0,1\}$, $0$ indicating liquid, $1$ indicating air, in such a
way that in exactly one of the three points of a fiber we have $u = 1$ (air).
Looping around an edge of the tetrahedron would take from one point to another
of the same fiber, thus forcing the value of $u$ to jump somewhere along the
loop, which in turn would force the soap film to ``wet'' the edge.

The presence of triple curves implies that at least three sheets are required for a
cover modelling the film problem, however the natural construction
using suitable cyclic permutations of the three sheets when circling around each of the
six edges of the tetrahedron cannot produce a surface with holes.
This is because any path that traverses a tetrahedron entering from any face and exiting
through another one is by topological reasons linked to exactly one of the edges and hence
forced to traverse the film.
Some way to distinguish tight loops around an edge and loops that encircle the edge far away
is then required.

Hence the construction is more involved and requires the introduction of two
``invisible wires''.
This is done in the same manner and for similar reasons as in \cite[Section 5.1]{Br:95},
see in particular examples 7.7 and 7.8 in that paper. After constructing the covering space $\covering$,
we can adapt the machinery of \cite{AmBePa:15} (see Section \ref{sec:functional}), and settle the Plateau problem
in $BV$, with the differences that here the 
cut surfaces have selfintersections,
and  that the involved functions defined on $\covering$, instead of taking
values in an equilateral triangle (with barycenter at the origin) with the constraint of having zero sum 
on each fiber,
take here values in $\{0,1\}$,
with the restriction  indicated
in \eqref{eq:domain} and discussed above.

Our next main result (Theorem \ref{teo:comparison}) is to prove that, for a sufficiently elongated tetrahedron, there is a surface spanning
its boundary, having the topology of the surface of Figure \ref{fig:morgan} right, and 
having area {\it strictly less} than the area of the conelike configuration. 
Therefore, if we allow for competitors of higher genus, 
we expect the conelike surface  not to have minimal area.
We remark that our result does not cover
the case of a {\it regular} tetrahedron.

Positioning the invisible wires is delicate. Indeed, 
we would like the invisible wires not to influence the 
minimal film, which requires that
the film does not wet them.
This is not proved here, although the numerical simulations strongly support this fact for
a sufficiently elongated tetrahedron and suitably positioned invisible wires.
On the other hand, 
the discussion in Section \ref{sec:whatcangowrong} shows that a nonwetting
relative $BV$-minimizer does not exclude the existence of an absolute minimizer with the
structure of Figure \ref{fig:surfacefilmwrong}, which we would like to exclude.
Again, the numerical simulations support the conjecture that if the invisible wires are
positioned sufficiently far away from the short edges, then the absolute minimizer has the
required topology and does not wet the invisible wires.
On the contrary, positioning the invisible wires near the short edges would produce
absolute minimizers that partially wet the invisible wires.
We observe that soap films that partially wet a given curve were discussed in \cite{Al:76}
and proved to exist for any knotted curve in \cite{Pa:92}.
In Section \ref{sec:allcoverings} we describe all possible triple covers of the base space
among those producing soap films wetting all the edges of a tetrahedron. We 
conclude the paper with Section \ref{sec:conclusions}, where we describe
the results of some numerical simulations, in particular varying the length
of the long edges of a tetrahedron.


\section{The base space}\label{sec:base}
In view of the symmetry%
\footnote{The symmetry group of the surface of Figure \ref{fig:morgan}
(right) turns out to be $\DD_{\textrm{2d}}$, using Schoenflies notation.
}
of the desired surface, it is convenient to
think of the tetrahedron as a wedge with two short edges, $\shortone$ and $\shorttwo$, and four long edges,
$\longs_i$, $i = 1, \dots, 4$; for simplicity we name the one-skeleton 
of the wedge, i.e. the union of all edges, as
$\edges = (\cup_{i=1}^2 \shorts_i) \cup (\cup_{i=1}^4 \longs_i)$.
A sketch of such a wedge is displayed in Figure \ref{fig:wedge}.

\begin{figure}
\includegraphics[width=0.5\textwidth]{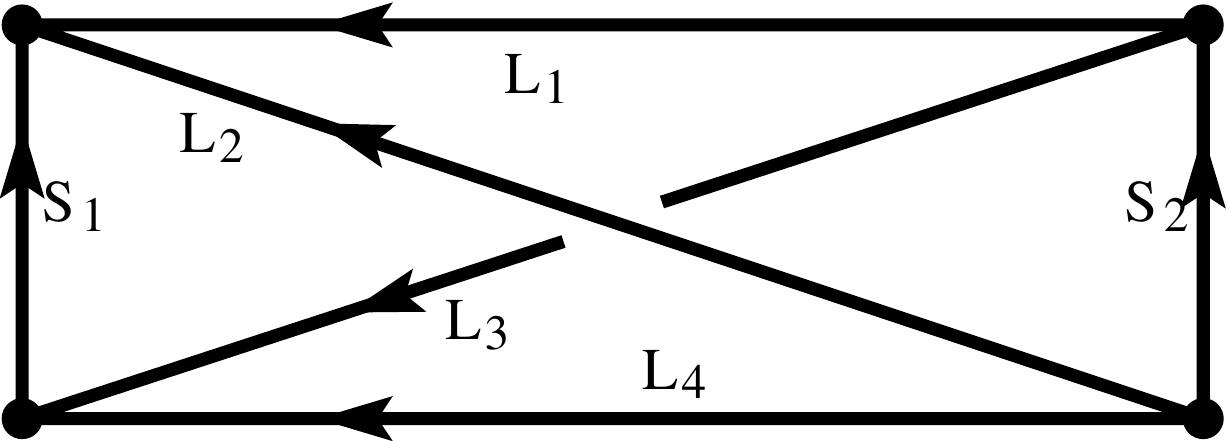}
\caption{\small{In view of the symmetry $\DD_{\textrm{2d}}$ of the desired surface, it is convenient to
think of the tetrahedron as a wedge with two short edges, $S_1$ and $S_2$, and four long edges,
$L_1, L_2, L_3, L_4$.
When required, e.g. in \eqref{eq:arcs},
 the short edges are oriented upwards, and the long edges are oriented from right
to left.}}
\label{fig:wedge}
\end{figure}

In order to construct a proper base space for our cover  let us summarize the required properties of the
soap film that we would like to obtain.

\begin{enumerate}
\item{}
The soap film is required to wet all six edges of the tetrahedron/wedge.  From the point of view of a
covering space this is achieved by requiring that any closed
path that circles once around an edge at
a short distance acts on the fiber with no fixed points;
\item{}
it is possible to find closed paths that suitably traverse the tetrahedron (with reference to
Figure \ref{fig:morgan}, one path entering from the front face
and exiting from the back face, the other entering from the right face and exiting from the bottom face)
that when lifted to the covering space do not move at least one point of a fiber.
\end{enumerate}

The second requirement seems at first sight incompatible with the first
one, since such traversing paths are
actually forced to circle once around a single edge of the tetrahedron.  This is unavoidable and a
consequence of the topology of the graph having the six edges as arcs.
However these paths  are allowed to circle the edges ``far away'', and we can make the two paths, the one
circling (say) the edge on the left ($\shortone$ in Figure \ref{fig:wedge}) at a
short  distance and the one
traversing the visible hole in the soap film in Figure \ref{fig:morgan}, not homotopically
equivalent by introducing an obstruction in the base space
in the form of an invisible wire, displayed as the left circle in Figure \ref{fig:covering_base}.
The invisible wire has the purpose of making the two paths not equivalent, but in the meantime we do not want it
to ``perturb'' the soap film.

\begin{figure}
\includegraphics[width=0.7\textwidth]{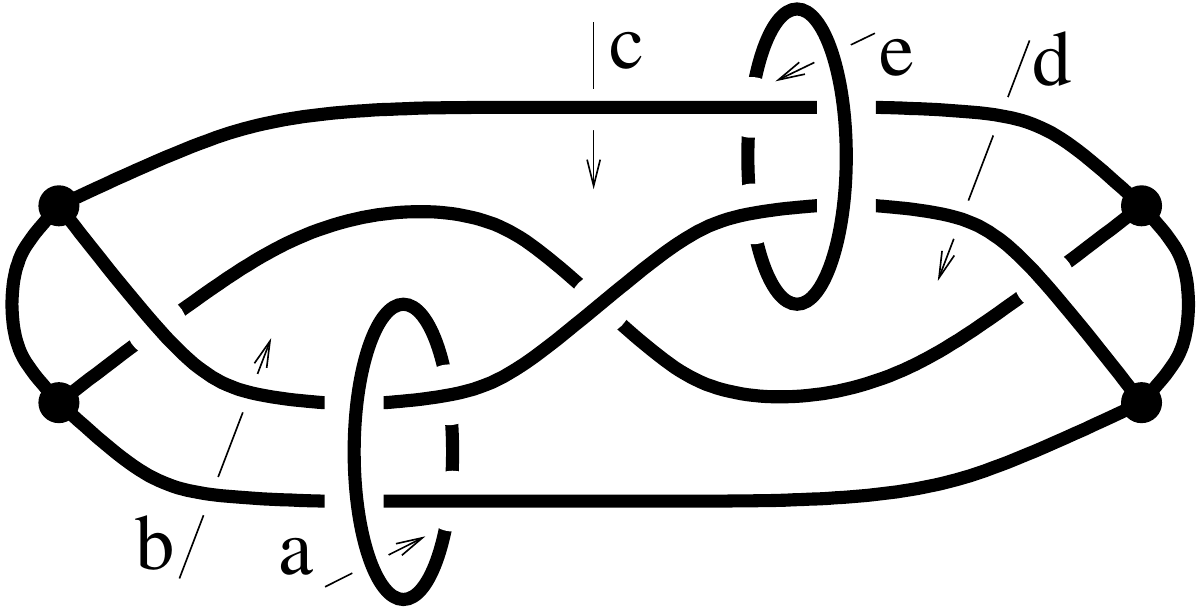}
\caption{\small{The base space $\base$  is $\R^3$ with the displayed arcs removed.
Arrows and labels correspond to the generators of a presentation of
the fundamental group.}}
\label{fig:covering_base}
\end{figure}

We actually need to introduce two invisible wires, in the form of two closed loops $\loopone$, $\looptwo$ suitably interlaced
to the edges of the wedge.
We name 
their union as $\iwires = \loopone \cup \looptwo$.
The result is illustrated in Figure \ref{fig:covering_base}, the 
base space 
$$
\base = \R^3 \setminus (\edges \cup \iwires)
$$
being the complement
in $\R^3$ of the system of curves displayed as thick lines.
We shall use it to construct the covering space $\covering$.

The picture follows the usual convention of inserting small gaps to denote
\emph{underpasses} of a curve below another.
The four dots (vertices of the tetrahedron) represent points where three curves meet.
This system of curves is the disjoint union of two loops ($\loopone$ and $\looptwo$),  a set of
two short curves ($\shortone$, $\shorttwo$) and four long curves ($\longone$ to $\longfour$) joining four
points.
The latter is topologically equivalent to the set of edges of a tetrahedron.

Each of the two $\loopone$ and $\looptwo$ loops around a pair of long edges, they are called \emph{invisible
wires} in \cite{Br:95} and their presence is essential to allow for jump sets 
(see Section \ref{sec:functional}) with the required topology.
The loop $\loopone$ is the one nearest to the short edge $\shortone$.

The quantity in the next definition plays an important role.

\begin{definition}
Given a choice of the geometry of $\base$, $\wiredistM$ is defined as the $L^\infty$-distance\footnote{%
The $L^\infty$-norm $\|x\|_\infty := \max(|x_1|,|x_2|,|x_3|)$ is used here for convenience in view of the
estimates to follow.}
 from the two
short edges and the invisible wires of $\base$:
$$
\wiredistM := \min \{ \Vert x - \xi \Vert_\infty : x \in \shortone \cup \shorttwo, \xi \in \loopone \cup \looptwo \} = {\rm dist}_{L^\infty}(S_1 \cup S_2, 
C_1 \cup C_2).
$$
\end{definition}

\begin{remark}
In constructing the base space $\base$ we did not pay much emphasis on its geometry (see e.g.
Figure \ref{fig:covering_base}),
which is allowed  as long as we study topological properties like its fundamental group, or when constructing
the covering space $\covering$.%
\footnote{It should be noted here that 
the base space $\base$ is
path connected, locally path connected and semilocally simply-connected \cite[Chapter 1.3]{Ha:02}.}
However, when considering the minimal film, the actual geometry becomes important.
We shall then make specific choices both for the set of curves corresponding to the tetrahedral
frame, straight segments with two different lengths, and for the two closed curves corresponding to
the invisible wires.
We point out here that the two invisible wires can be safely deformed into straight lines, 
one in the $z$ direction through $(-s,0,0)$, 
the other in the $y$ direction through $(s,0,0)$, 
for a suitable
choice of $s>0$, observing that a straight line is a closed curve in the compactification
$\Sbb^3$ of $\R^3$.
To avoid problems at infinity, where the two invisible wires would intersect, we can deform one or
both of them ``far away'' (outside the convex hull of $\edges$).
\end{remark}

\begin{definition}\label{def:geometry}
For two parameters  $h > 0$ and $s \in (0, 1)$, we define a specific geometry for $\base = \basehs$ as follows.
The four vertices of the wedge $\tetrahedron$ are fixed at
\begin{equation}\label{eq:coordinates_vertices}
(h, \pm 1, 0), \qquad (-h, 0, \pm 1),
\end{equation}
and connected with straight segments of length $|\shortone| = |\shorttwo| = 2$,
$|\longs_i| = \sqrt{2 + 4h^2}$, $i = 1,\dots,4$.
The invisible wires are now selected as straight lines 
$\loopone = \{(-s h,0,t): t \in \R\}$
and
$\looptwo =  \{(s h,t,0): t \in \R\}$,
(possibly modified far away from $\tetrahedron$).
\end{definition}

The special value $h = \frac{\sqrt{2}}{2}$ results in a regular tetrahedron, whereas for $h > \frac{\sqrt{2}}{2}$
we have $|\longone| > |\shortone|$.

Clearly, we have $\wiredist(\basehs) = h(1-s)$.


\section{Computing the fundamental group}\label{sec:fundamental}
We shall occasionally need to fix a base point $\basepoint$ in $\base$.
It is positioned far away from the set of curves, its actual position is
inessential and we shall think of it as the eye of the observer.
Equivalently, we can position it at infinity after compactification of
$\R^3$ into $\Sbb^3$.%
\footnote{Since a single point into a three-space cannot obstruct a closed path,
adding the point at infinity to $\R^3$ does not impact the computation of the
fundamental group, nor it will make any difference in the construction of the
covering space.
For that matter, it also makes no difference to substitute $\R^3$ with a big
ball compactly containing the tetrahedron.}

The fundamental group $\pione$ can be computed by using a technique similar
to the construction of the Wirtinger presentation of a knot group.
We position the base point $\basepoint$ above the picture 
and select
a set of closed curves $a$, $b$, $c$, $d$, $e$ that will serve as generators
of the group.
These curves (that represent elements of $\pione$) are displayed in Figure
\ref{fig:covering_base} as arrows to be interpreted as curves that start at $\basepoint$, 
run straight to the tail of one of the arrows, follow the arrow
\emph{below} one or two arcs of the system of curves and finally go back straight
to $\basepoint$.

In order to prove that $a$, $b$, $c$, $d$, $e$ generate the whole fundamental group it
is enough to construct curves
that loop around each of the pieces of curves
running from an underpass or node to another underpass or node.

As an example, the product $c^{-1} d$ is equivalent to a curve that loops around
the piece of intermediate edge running from one disk to the other
($L_{2,2}$ in the notation fixed below).
This can be seen by observing that loop $d$ can be dragged from the right
to the left of the top-right circle.

At this point we can construct $b c^{-1} d$ looping around the bottom-right
piece of curve $L_{4,2}$ from the underpass to the lower-right node.

Curve $c^{-1} b$ corresponds
to one of the four arcs 
($L_{3,2}$ in the notation below) in which the long edge
connecting the lower-left node  to the upper-right one is divided.
This can be seen by observing that modifying $b$ by extending its head to pass under
$L_{3,2}$ (Figure \ref{fig:covering_base}) gives a curve that is homotopic to $c$.

Traversing an underpass can be achieved by conjugation with the loop corresponding
to the overpass, which allows to obtain all curves associated to the long
edges.
Product of two of these finally allows to loop around the short edges on the left
and on the right.
We end up with the following table, where the second index denotes what piece of the
long arc of the wedge we are referring to (from left to right):
\begin{equation}\label{eq:arcs}
\begin{aligned}
& L_{1,1}\to  c, \qquad L_{1,2}\to e c e^{-1} ,
\\
& L_{2,1}\to  a c^{-1} d a^{-1}, \qquad L_{2,2}\to c^{-1} d, \qquad L_{2,3}\to e c^{-1} d e^{-1} ,
\\
& L_{3,2}\to c^{-1} b, \qquad L_{3,3} \to d^{-1} b c^{-1} d ,
\\
& L_{4,1}\to  a d^{-1} c b^{-1} a^{-1} , \qquad L_{4,2} \to d^{-1} c b^{-1} ,
\\
& S_1 \to a d^{-1} c a^{-1} c^{-1}, \qquad S_2 \to b c^{-1} e c e^{-1} .
\end{aligned}
\end{equation}
We omit the values associated to $L_{3,1}$ and $L_{3,4}$ (readily deducible by conjugation due to
traversal of, respectively, $L_{2,1}$ and $L_{2,3}$), since we shall not need them.

Each crossing provides a relation among the three curves involved.  By collecting
all such relations and simplifying
we finally end up with the presentation
(five generators and two relators)%
\footnote{This is actually a right-angled Artin group.}
\begin{equation}\label{eq:presentation}
\pione = <a, b, c, d, e; ab = ba, de = ed>.
\end{equation}

A different and more direct way to obtain the presentation \eqref{eq:presentation} consists
in an ambient deformation of a tubular neighborhood 
of the set of curves of Figure
\ref{fig:covering_base}.  
An important remark here is that it is possible to flip the
configuration of a ``Steiner-like'' pair of adjacent triple junctions as shown in Figure
\ref{fig:steinerflip},
 without changing the homotopy type of both the set of curves and of
its complement in $\R^3$.
This allows to transform the set of curves of Figure \ref{fig:covering_base} by homotopy
equivalence into the first configuration of Figure \ref{fig:bouquet}.
Then we shrink two curves to a point in the passage from the second to the third configuration, 
and again one curve from the third to the last without modifications to the homotopy type of
both the set of curves and of its complement in $\R^3$.

\begin{figure}
\includegraphics[width=0.9\textwidth]{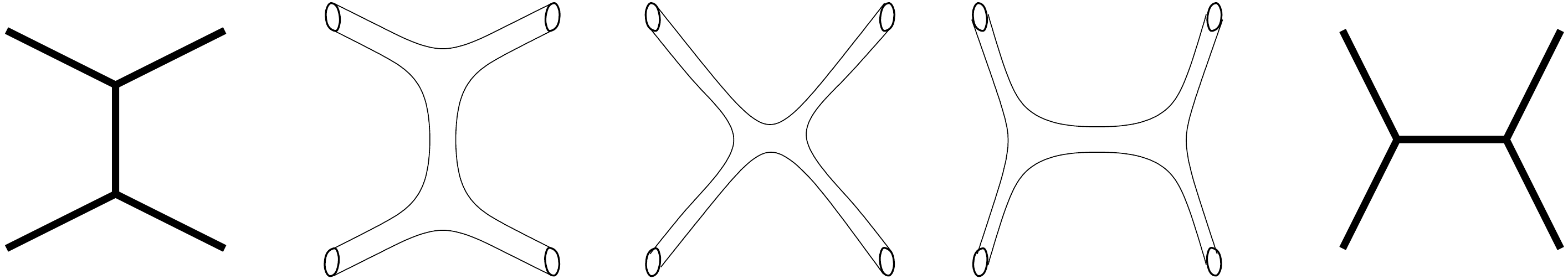}
\caption{\small{A tubular neighborhood of the Steiner tree on the left can be
ambiently deformed into a tubular neighborhood of the Steiner tree on the right.}}
\label{fig:steinerflip}
\end{figure}

Using this equivalence we can deform the set of curves as shown in Figure \ref{fig:bouquet}
to a bouquet of three loops with two linked rings, a configuration consistent with
the presentation \eqref{eq:presentation} for the fundamental group of the complement.

\begin{figure}
\includegraphics[width=0.25\textwidth]{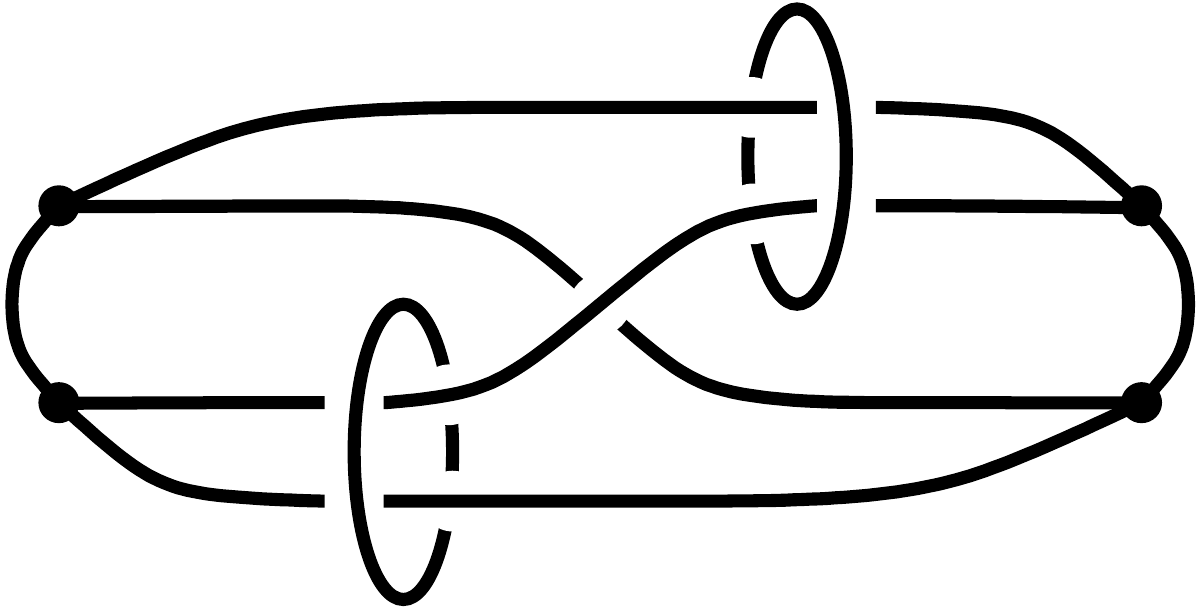}
\includegraphics[width=0.25\textwidth]{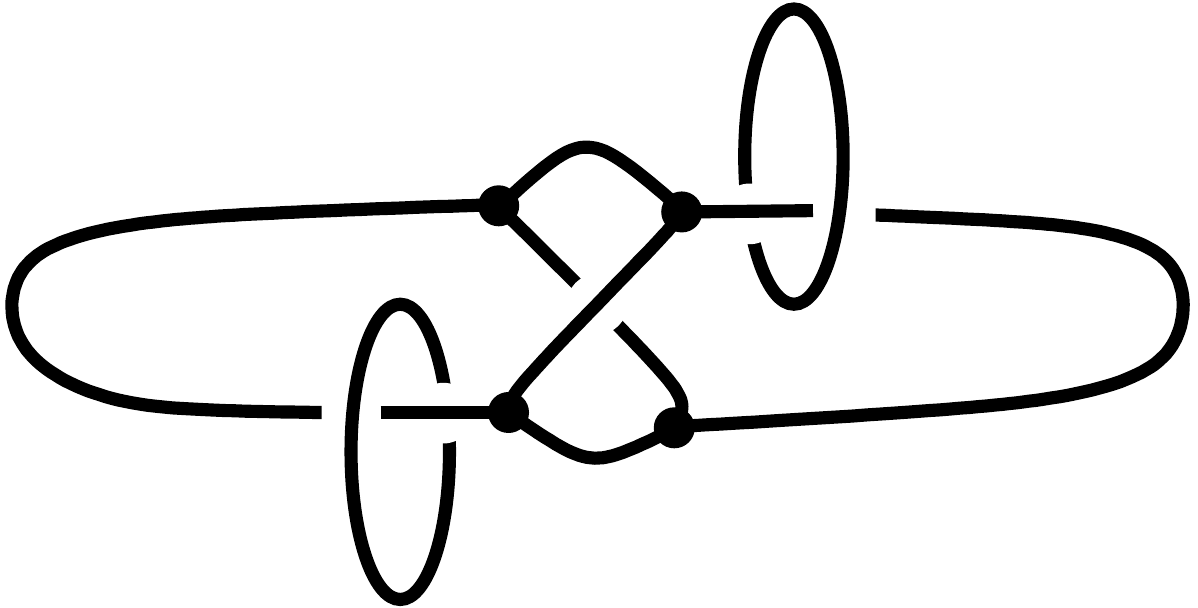}
\includegraphics[width=0.25\textwidth]{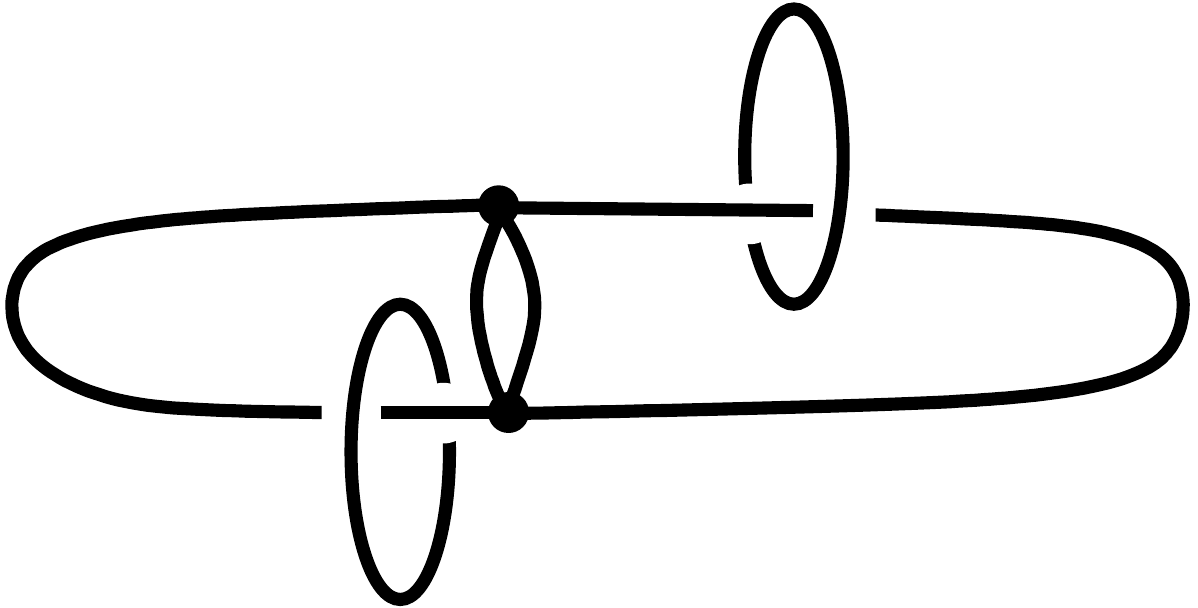}
\includegraphics[width=0.15\textwidth]{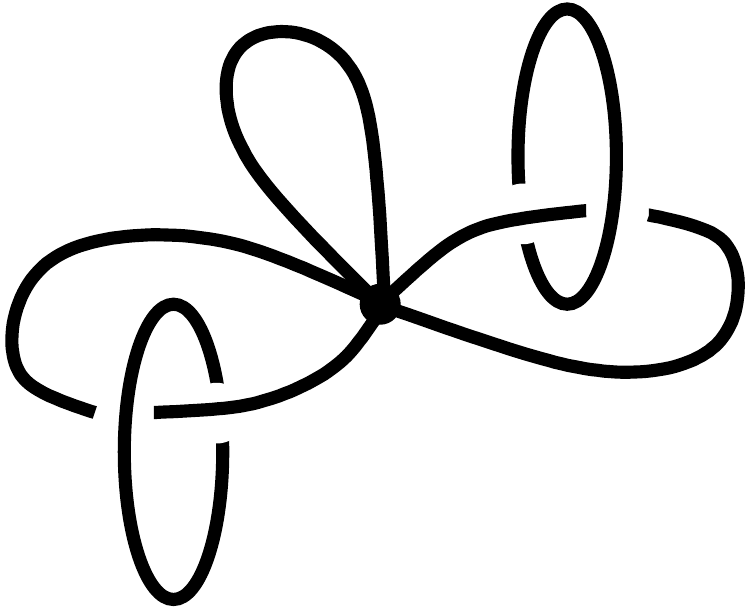}
\caption{\small{A sequence of steps showing that a tubular neighborhood of the system of curves
can be ambiently deformed
into a (tubular neighborhood of a) bouquet of three loops with two linked rings.}}
\label{fig:bouquet}
\end{figure}

It should be noted that the graphs in the sequence of Figure \ref{fig:bouquet} are not mutually homeomorphic,
nor they are homeomorphic to the system of curves of Figure \ref{fig:covering_base}, whereas their complement
in $\R^3$ is diffeomorphic.
We have an ambient isotopy as soon as we substitute each system of curves with a small tubular neighbourhood.

\section{The triple cover}\label{sec:covering}
The presence of triple curves in the soap film that we would like to reconstruct (Figure \ref{fig:morgan}, right) implies that 
the covering space must be at least of degree three.
There is no quadruple (tetrahedral) point in this film, so that three sheets for the cover
might be sufficient.

However, the particular structure of the surface (a single smooth component of the film that
arrives to a triple line from two distinct directions) requires special treatment, similar to
that of Examples 7.7 and 7.8 of \cite {Br:95}, with the introduction of the so-called
\emph{invisible wires}.
These are introduced as two copies of $\mathbb S^1$ having 
the purpose of exchanging sheets $2$ and
$3$, whereas sheet $1$ is glued to itself.

As we shall see in Section \ref{sec:allcoverings} the introduction of the invisible wires
is essential, in the sense that a cover with three (or two) sheets of the complement of
the six tetrahedral edges is incompatible with the wetting conditions imposed on all edges.

A \emph{cut and paste} construction of our cover is as follows \cite{AmBePa:15}.

\begin{figure}
\includegraphics[width=0.7\textwidth]{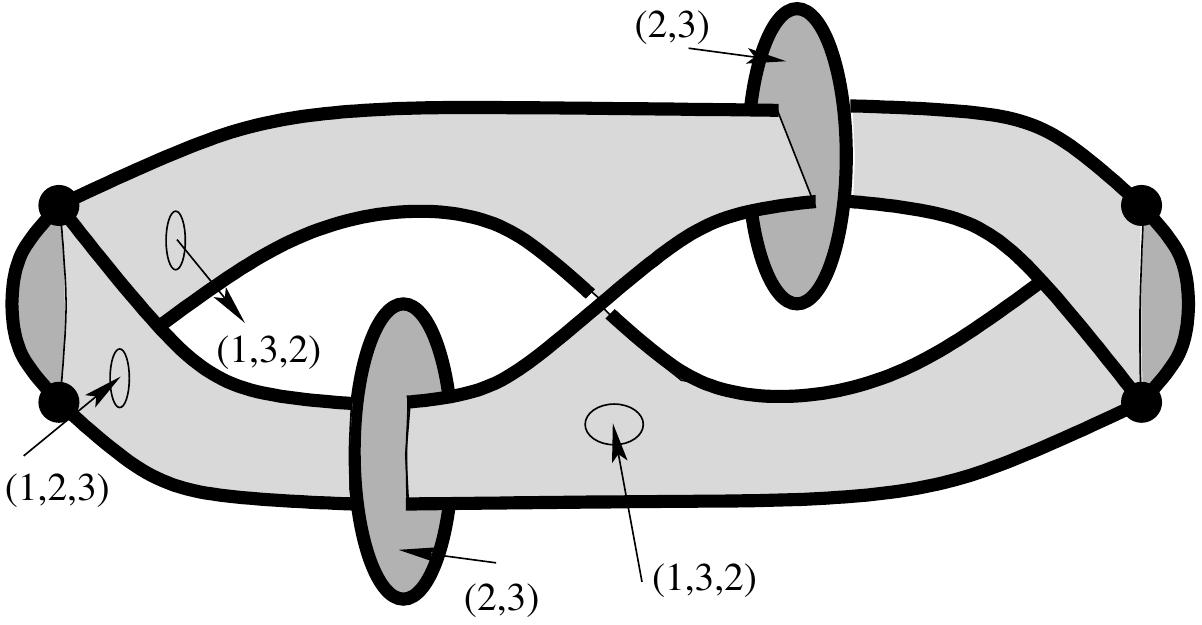}
\caption{\small{The cover is defined by a cut and paste technique on three copies of the
base space cutted along the shaded surfaces. Local orientation is indicated, 
as well as the gluing among the sheets, with cycle notation.
}}
\label{fig:covering_cut}
\end{figure}

\begin{itemize}
\item
Take three copies of the base space $\base$ (complement in $\R^3$ of the system of curves shown in
Figure \ref{fig:covering_base}): sheet $1$, $2$ and $3$;
\item
cut them along the shaded surfaces\footnote{%
The set of shaded surfaces and the system of curves give rise to a stratification, where the
two-dimensional stratum (the cutting surfaces) is divided into connected components.
The gluing permutation locally associated to each component can be transported along the whole
component and must close consistently along closed paths on the surface.
This would be a problem for non-orientable components (which is not our case) unless the permutation
is of order two.
In particular this is not an issue for covers of degree $2$.
}
displayed in Figure \ref{fig:covering_cut}; 
\item glue the three sheets again in such a way that when crossing the large surface cut the three sheets
are glued cyclically.
When crossing the shaded disks sheets $2$ and $3$ get exchanged whereas sheet $1$ on one side is glued to
sheet $1$ on the other side.
\end{itemize}

\begin{remark}[Cutting locus as a stratified set]\label{rem:stratified_cutting}
The set of cutting surfaces of Figure \ref{fig:covering_cut} (the cutting set) forms a stratification composed by seven
pieces of (2D) smooth surfaces joined by four (1D) arcs (thin arcs in the picture, two triple curves
and two selfintersection curves) and four (0D) end-points of the two selfintersection curves.
The stratification as a whole is bounded by the set of curves defining $\base$ (thick curves in the
picture) and by the four vertices of $\tetrahedron$.
Each piece of 2D surface is orientable.
The arrows indicate a choice of orientation,
the two small lunette-like dark regions on the left and on the right are oriented from front to back
(however they are not a critic part of the cutting locus, see  Remark \ref{rem:lunette})
and finally the small portion of surface on the right of the upper disk is oriented from back to front,
consistent with the orientation on the left of the disk.
It should be noted that the large piece of surface connecting the two disks is subjected to a
twist in the region near the center of the picture, so that the top portion is oriented from back to
front.
The gluing is based on the permutations shown in the picture
(expressed in cycle notation) when crossing the surface in the direction of the arrows.
Local triviality entails a constraint at
the two intersection curves between the disks and the other surfaces:
a tight loop around one of such intersections is contractible in $\base$, hence the composition of
the four permutations associated to crossings of the cuts (or their inverse if the
loop crosses in the opposite direction with respect to the cut orientation) must give the identity.
This results in a constraint upon the
permutations on the left and on the right: they must be related by a conjugation defined by
the transposition $(2,3)$ associated to the disks.
On the lunette surface to the left the permutation is $(1,3,2)$ when crossing from front to back.
It cannot be chosen arbitrarily because of the local triviality property:
a tight loop around the triple curve is contractible, hence the product of the three permutations associated to crossings of the cuts
(or their inverse, according to orientation) must be the identity.
Similarly, the permutation associated to the right lunette is forcibly given by $(1,2,3)$.
\end{remark}

\noindent We denote by $\prj: \covering \to \base$ the cover defined in this way.

\begin{remark}\label{rem:lunette}
It is not actually necessary to use triple curves in the definition of the cover,
indeed the left and right small fins could be removed and the two sides of the large surface could be
extended up to the short lateral edges of the tetrahedral wedge.
The chosen cut surface just mimics the structure that we expect for the minimizing film.
\end{remark}

\begin{remark}\label{rem:metric}
The local triviality of the cover allows to naturally locally endow the covering space $\covering$
with the euclidean metric induced by $\prj$ from the euclidean structure of $\base$.
\end{remark}

\begin{remark}\label{rem:cut_deformation}
The abstract construction in particular implies that, up to isomorphisms, the covering space constructed
by cut and past is independent of the actual geometry of the cutting set, provided it has the same
structure and the same gluing permutations at corresponding points.
More precisely, the covering space is the same (up to isomorphisms) if the cutting set is deformed
using a homeomorphism of $\R^3$ into itself with compact support and fixing the edges of $\Wedge$
and the invisible wires $\iwires$, and the gluing permutations are defined consistently.
\end{remark}

\subsection{The cover is not normal}\label{sec:notnormal}

The cover $\prj: \covering \to \base$ is clearly path connected.
We claim that it is not normal \cite[Chapter 1.3]{Ha:02},
as a consequence of the fact that sheet $1$ is somehow specially treated by the gluing performed
at the two disks.
We recall that a cover is normal if for any pair $y, \eta \in \covering$ with $\prj(y) = \prj(\eta)$ there
exists a deck transformation\footnote{
A deck transformation is a homeomorphism $\psi : \covering \to \covering$ such that
$\prj (\psi(\eta)) = \prj(y)$ for any $y \in \covering$.
}
 $\psi : \covering \to \covering$ with $\psi(y) = \eta$.

\begin{Proposition}\label{prop:the_covering_is_not_normal}
The cover $\prj: \covering \to \base$ is not normal.
\end{Proposition}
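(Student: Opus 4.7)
The plan is to use the standard algebraic characterization of normal covers: since $\prj: \covering \to \base$ is connected, it is normal iff the deck transformation group acts transitively on each fiber, iff the point stabilizers $\prj_*\pi_1(\covering, y)$ for $y$ in a fixed fiber over $\basepoint$ all coincide as subgroups of $\pione$. To disprove normality it therefore suffices to exhibit two preimages $y_1, y_2 \in \prj^{-1}(\basepoint)$ and a loop $[\gamma] \in \pione$ which belongs to $\prj_*\pi_1(\covering, y_1)$ but not to $\prj_*\pi_1(\covering, y_2)$.

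First I would read off the relevant piece of the monodromy representation $\rho: \pione \to S_3$ directly from the cut-and-paste description of Section~\ref{sec:covering}: as recorded in Remark~\ref{rem:stratified_cutting}, transverse crossing of either of the two shaded disks induces the transposition $(2,3)$ on $\{1,2,3\}$. In particular, sheet $1$ is singled out among the three sheets, being the unique fixed point of this transposition. This is exactly the asymmetry pointed out in the sentence preceding the proposition.

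Next I would take $\gamma \subset \base$ to be a tight meridian of the invisible wire $\loopone$, chosen small enough that it crosses the associated shaded disk of the cutting locus exactly once transversally and is disjoint from every other stratum of that locus; such a meridian exists by choosing a small loop around a generic interior point of $\loopone$, away from the two curves where the disk meets the large sheet. By the previous paragraph, $\rho([\gamma]) = (2,3)$. Denote by $y_i \in \prj^{-1}(\basepoint)$ the lift of the basepoint lying on sheet $i$. The unique lift of $\gamma$ starting at $y_1$ closes up (since $(2,3)$ fixes $1$), so that $[\gamma] \in \prj_*\pi_1(\covering, y_1)$, whereas the lift starting at $y_2$ ends at $y_3$, so that $[\gamma] \notin \prj_*\pi_1(\covering, y_2)$.

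This immediately gives $\prj_*\pi_1(\covering, y_1) \neq \prj_*\pi_1(\covering, y_2)$, hence no deck transformation can map $y_1$ to $y_2$, and the cover is not normal. There is no substantive obstacle: the proof is essentially just the observation that the point stabilizer $\{\mathrm{id},(2,3)\} \le S_3$ for the monodromy action on the fiber is not normal in its image (which in fact equals $S_3$, since a transitive subgroup of $S_3$ containing a transposition is all of $S_3$), so the asymmetry between sheet $1$ and sheets $2,3$ built into the gluing at the disks is exactly what prevents normality.
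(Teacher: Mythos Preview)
Your proof is correct and essentially identical to the paper's: both hinge on the observation that a meridian of the invisible wire $\loopone$ (the generator $a$) acts on the fiber as the transposition $(2,3)$, so its lift starting on sheet~$1$ closes up while its lift starting on sheet~$2$ does not. The paper packages this as a contradiction with the existence of a nontrivial deck transformation (a homeomorphism cannot send a closed lift to an open one), whereas you package it as $\prj_*\pi_1(\covering,y_1)\neq\prj_*\pi_1(\covering,y_2)$; these are two phrasings of the same argument.
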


\begin{proof}
It is sufficient to show that 
the identity is the only deck transformation.
Suppose by contradition that $\psi$ is  a nontrivial deck transformation. Then
$\psi$ has no fixed points \cite[page 70]{Ha:02}.
Now take $y \in 
 \prj^{-1}(\basepoint)$ in sheet $1$, 
 $\basepoint$ being the base point of $\pione$.
Then $\psi(y)$ belongs to either sheet $2$ or $3$; 
suppose for definiteness that it belongs to sheet $2$.
If $\gamma$ is a closed path in $\base$ corresponding to $a$ (Fig. \ref{fig:covering_base})
of $\pione$, we can lift $\gamma$ to $\covering$ into two
distinct paths, one starting at $y$, the other starting at $\psi(y)$.
These paths are mapped into each other by the homeomorphism $\psi$, however one is closed (the one starting
at $y$), whereas the other is open, since when traversing the disk, the lifted path will continue on sheet $3$.
This gives a contradiction.
\end{proof}


\subsection{Abstract definition of the covering space}\label{sec:abstract}

It is well known that an abstract definition of a covering space
$\prjabstract : \coveringabstract \to \base$
of $\base$ is based on selecting a subgroup $H$ of $\pione$,
considering
 the set $\ucovering$ of paths $\gamma : [0,1] \to \base$ with $\gamma(0) = \basepoint$,
and
taking the quotient with respect to the equivalence relation
\begin{equation}\label{eq:abstractequiv}
\gamma_1 \sim \gamma_2 \iff
\gamma_1(1) = \gamma_2(1) \text { and }
[\gamma_1 \gamma_2^{-1}] \in H ,
\end{equation}
where $\gamma_2^{-1}$ denotes the path $\gamma_2$ 
with opposite orientation, and defining the projection from
$\ucovering$ to $\base$ as $[\gamma] \to \gamma(1)$.
The degree of the cover is given by the index of $H$ in $\pione$.
We shall describe a procedure to produce a subgroup $H$ of index $3$ in $\pione$
(finitely presented in \eqref{eq:presentation}) and subsequently prove in Theorem \ref{teo:abstract}
that it gives a cover isomorphic to $\prj : \covering \to \base$.

In order to construct $H$ 
we need a concrete way to identify its elements when written as
words in the generators of the presentation \eqref{eq:presentation}.
The first task is then to compute the actions
$\sigma_a$,
$\sigma_b$,
$\sigma_c$,
$\sigma_d$,
$\sigma_e$
on the fiber $\{\basepointone,\basepointtwo,\basepointthree\}$ over the base point
$\basepoint \in \base$ (the superscripts refer to the three sheets $1, 2, 3$),
corresponding to each generator in \eqref{eq:presentation}.
This amounts in associating to each generator the resulting permutation induced on sheets
$1$, $2$, $3$.
A quick check comparing Figures \ref{fig:covering_base} and \ref{fig:covering_cut} suggests to define
$$
\sigma_b = \sigma_d = () , \qquad
\sigma_a = \sigma_e = (2,3) , \qquad
\sigma_c = (1,3,2) ,
$$
where $()$ denotes the identity permutation. 
Observe that $\sigma_a$ and $\sigma_b$ commute, as well as $\sigma_d$ with $\sigma_e$, so that
the two relators in \eqref{eq:presentation} are consistent with these actions.
Given an element of $\pione$ expressed as a word $\word$ in the generators, 
by substituting these actions to the generators in $\word$ and performing the
multiplications (left to right), we are then able to compute the action of the element
represented by $\word$ on the fiber $\{\basepointone,\basepointtwo,\basepointthree\}$
in terms of a permutation of the three superscripts.
$H$ will then be recovered as consisting of those words that produce a permutation fixing $1 \in \{1,2,3\}$.
Using relations satisfied by the actions $\sigma_a$ through $\sigma_e$ we can simulate the final
multiplication after substitution in $\word$ by imposing such relations directly on the generators,
the result would be the same.
So we can safely add such relations to the presentation \eqref{eq:presentation} as
extra relators\footnote{The presence of $a^2$, $c^3$ and $ca = ac^2$ 
in the list of relators is due to the fact that $\sigma_a^2 = \sigma_c^3 = ()$,
and $\sigma_c \sigma_a = \sigma_a \sigma_c^2$.}
$$
K := < a, b, c, d, e; ab = ba, de = ed, b, d, e = a, a^2, c^3, ca = ac^2 >
$$
to obtain a new group $\newgroup = \pione / \hat H$ and a projection $q : \pione \to \newgroup$,
where $\hat H$ is the normal subgroup of $\pione$ generated by the added relators.
A sequence of Tietze transformations \cite{MaKaSo:76} reduces the above presentation to
$\newgroup = <a, c; a^2, c^3, ca = a c^2>$ which is quickly seen to be isomorphic to the symmetric
group $S_3$ with representative elements
$\SSS := \{ a^\alpha c^\gamma : \alpha \in \{0,1\}, \gamma \in \{0, 1, 2\} \}
\subset \pione$.
Upon identification of the representative elements with their equivalence class, the
projection $q$ can be interpreted as a projection $q : \pione \to \SSS$.

Finally, the subgroup $H \leqslant \pione$ is defined as the set of $g \in \pione$ such that
$\gamma = 0$ if we write $q(g)$ as $q(g) = a^\alpha c^\gamma \in \SSS$.
It corresponds to all paths in $\pione$ that remain closed when lifted on $\covering$ with starting
point $\basepointone$ taken in sheet $1$.

As an example, consider the word
$w = a d^{-1} c a^{-1} c^{-1}$ (this word corresponds to looping once around the short edge
$\shortone$, see \eqref{eq:arcs}).
We can remove all occurrences of $d$ (and of $b$, but there is none anyway,
also we could substitute $a$ for $e$ if any occurrence of $e$ were present)
to obtain the
word $a c a^{-1} c^{-1}$.
Enforcing $a^2 = c^3 = 1$ (empty word) we arrive at $a c a c^2$;
using $ca = ac^2$ then produces $a^2 c^4$ that finally reduces to the
normal form $a^\alpha c^\gamma$, with $\alpha = 0$, $\gamma = 1$.
Since $\gamma \not= 0$ we conclude that $w \not \in H$.

\begin{Proposition}\label{prop:the_subgroup_has_index_3}
The subgroup $H$ has index $3$ in $\pione$ and it is not normal.
\end{Proposition}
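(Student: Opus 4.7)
The plan is to exploit the surjective homomorphism $q : \pione \to K \cong S_3$ just constructed. First I would verify that $q$ is well-defined: the images $\sigma_a,\sigma_b,\sigma_c,\sigma_d,\sigma_e$ of the generators must satisfy the two defining relators of $\pione$, which is immediate because $\sigma_b$ and $\sigma_d$ are trivial, so both $ab=ba$ and $de=ed$ are automatically preserved. Surjectivity is clear since $\sigma_a=(2,3)$ and $\sigma_c=(1,3,2)$ already generate the whole of $S_3$.

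Next, by construction $H = q^{-1}(T)$, where $T\subset K$ corresponds under the isomorphism $K\cong S_3$ to the order-$2$ subgroup $\{(\,),(2,3)\} = \langle \sigma_a\rangle$ (equivalently, $T$ is the stabiliser of $1$ under the natural action on $\{1,2,3\}$). The standard fact that a surjective homomorphism induces a bijection $\pione/H \to K/T$ via $gH \mapsto q(g)T$ then yields $[\pione:H] = [K:T] = 3$, which is the first assertion.

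For non-normality I would invoke the equivalence, valid because $q$ is surjective: $H$ is normal in $\pione$ if and only if $T$ is normal in $K$. The ``only if'' direction follows at once by applying $q$ to $gHg^{-1}=H$; the ``if'' direction holds because $H = q^{-1}(T)$ is automatically a union of $\ker q$-cosets. It therefore suffices to show that $T$ is not normal in $K\cong S_3$, which is immediate: a direct computation gives $\sigma_c\sigma_a\sigma_c^{-1} = (1,2)\notin T$, so, lifting to $\pione$, $a\in H$ whereas $cac^{-1}\notin H$. The only even mildly delicate step is reducing the word $cac^{-1}$ to the normal form $a^\alpha c^\gamma$ with $\gamma\neq 0$ in $K$; this is a short application of the relators $a^2=c^3=1$ and $ca=ac^2$, entirely analogous to the reduction of $ad^{-1}ca^{-1}c^{-1}$ already carried out in the excerpt, and presents no genuine obstacle.
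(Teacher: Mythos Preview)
Your proof is correct and follows essentially the same route as the paper. Both arguments hinge on the homomorphism $q$ onto $K\cong S_3$: the paper lists the three right cosets $H$, $Hc$, $Hc^{2}$ directly, while you invoke the correspondence theorem to identify $[\pione:H]$ with $[K:T]=3$; and for non-normality both exhibit the same witness $a\in H$ with $cac^{-1}\notin H$, you framing it as $\sigma_c\sigma_a\sigma_c^{-1}=(1,2)\notin T$ and the paper reducing $q(cac^{-1})$ to $q(ac)$ via $ca=ac^{2}$.
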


\begin{proof}
That $H$ is a subgroup is a direct check.
Its right cosets are obtained by right multiplication by $\gamma$ and $\gamma^2$ showing
that there are exactly three cosets (they correspond to $\gamma = 0,1,2$ in $\SSS$).
It is not a normal subgroup since $a \in H$ ($a = a^1 c^0$, hence $\gamma = 0$), but $c a c^{-1} \not\in H$.
Indeed $q(c a c^{-1}) = q(a c)$ by enforcing $c a = a c^2$.
The non normality of $H$ is consistent with the non normality of the cover.
\end{proof}

The next result ensures in particular that the approach of Section \ref{sec:functional}
is independent of the choice of the cut surface. 

\begin{theorem}\label{teo:abstract}
The cover $\prjabstract : \coveringabstract \to \base$
defined by $H \leqslant \pione$ is isomorphic to the
cover $\prj : \covering \to \base$ defined with the cut and paste technique.
\end{theorem}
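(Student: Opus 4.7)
The plan is to invoke the classification of covering spaces \cite[Theorem 1.38]{Ha:02}: since $\base$ is path connected, locally path connected, and semilocally simply-connected, the isomorphism class (preserving base points) of a connected cover is determined by the subgroup of $\pione$ obtained by pushing forward the fundamental group of the cover based at a chosen preimage of $\basepoint$. It therefore suffices to show that
$$
\prj_*\bigl(\pi_1(\covering,\basepointone)\bigr) \;=\; H,
$$
where $\basepointone$ is the preimage of $\basepoint$ lying on sheet $1$.

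First I would compute the monodromy representation $\sigma : \pione \to \mathrm{Sym}(\{1,2,3\})$ associated with $\prj:\covering\to\base$. By general covering space theory, a based loop $\gamma$ at $\basepoint$ lifts to a loop at $\basepointone$ if and only if the lift ends on sheet $1$, equivalently if $\sigma([\gamma])$ fixes $1$. Hence $\prj_*(\pi_1(\covering,\basepointone))$ is precisely the stabilizer of $1$ under $\sigma$. I would evaluate $\sigma$ on the five generators $a,b,c,d,e$ of the presentation \eqref{eq:presentation} by inspecting how the representative loops of Figure \ref{fig:covering_base} cross the cutting locus of Figure \ref{fig:covering_cut}: the loops $a$ and $e$ each traverse exactly one of the two shaded disks once, giving $\sigma_a=\sigma_e=(2,3)$; the loop $c$ crosses the large twisted surface once in the direction of its arrow, giving $\sigma_c=(1,3,2)$; the loops $b$ and $d$ can be chosen disjoint from the cuts (or crossing them in cancelling pairs once the Wirtinger relations at the overpasses are accounted for), giving $\sigma_b=\sigma_d=()$. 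One then checks that the relators $ab=ba$ and $de=ed$ of \eqref{eq:presentation} are respected by these permutations, confirming that $\sigma$ is a well-defined group homomorphism.

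Second, I would compare with the abstract construction of $H$ in Section \ref{sec:abstract}. By definition, the added relators defining $\newgroup$ are precisely the additional identities satisfied by the permutations $\sigma_a,\ldots,\sigma_e$, so the quotient $q:\pione\to\newgroup\simeq S_3$ coincides with $\sigma$ under the identification of $S_3$ with the set of representatives $\SSS=\{a^\alpha c^\gamma\}$. The condition that $\gamma=0$ in the normal form $a^\alpha c^\gamma$ of $q(w)$ is equivalent (using $\sigma_a(1)=1$ and $\sigma_c(1)=3$) to $\sigma(w)(1)=1$. Therefore $H$ coincides with the stabilizer of $1$ under $\sigma$, which is exactly $\prj_*(\pi_1(\covering,\basepointone))$.

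The classification theorem now yields a base-point preserving isomorphism of covers $\covering\simeq\coveringabstract$. The consistency with Proposition \ref{prop:the_subgroup_has_index_3} (both covers have degree $3$) and with Proposition \ref{prop:the_covering_is_not_normal} (the subgroup $H$ is non-normal, matching the non-normality of $\prj$) serves as a sanity check. The main obstacle is the geometric monodromy computation in the first step: one must carefully track the orientation of each cutting component and the twist of the large surface near the center, so that the permutations collected along each generator of Figure \ref{fig:covering_base} are exactly those claimed; the Wirtinger-type analysis already used in Section \ref{sec:fundamental} to derive \eqref{eq:arcs} provides the right bookkeeping, since crossings at underpasses are absorbed into conjugations that leave the stabilizer condition invariant.
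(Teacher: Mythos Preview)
Your proposal is correct and follows essentially the same approach as the paper: both arguments compute the monodromy permutations $\sigma_a,\dots,\sigma_e$ of the cut-and-paste cover on the five generators, verify they coincide with those used in defining $H$, and then invoke the classification of coverings via the induced subgroup/action on the fiber over $\basepoint$. The only cosmetic difference is that the paper phrases the comparison as matching the actions on the two fibers (after tagging the cosets $H,\,Hc,\,Hc^2$ as $1,3,2$), whereas you phrase it as identifying $H$ with the stabilizer of sheet $1$; these are equivalent formulations of the same check.
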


\begin{proof}
The proof consists in a direct check that $\pione$ defines the same action on the fiber over the base
point $\basepoint \in  \base$
\cite[p. 70]{Ha:02}.
We first need to define a bijection between the two fibers
$\prj^{-1}(\basepoint)$ and $\prjabstract^{-1}(\basepoint)$.
In view of \eqref{eq:abstractequiv} the set $\prjabstract^{-1}(\basepoint)$ consists in equivalence classes
of elements of $\pione$ with respect to the equivalence relation $g_1 \sim g_2$ if and only if
$g_1 g_2^{-1} \in H$ (we slightly abuse the notation used in \eqref{eq:abstractequiv}
for the equivalence relation and use it here on elements of $\pione$ rather than on loops based on $\basepoint$).
In other words $\prjabstract^{-1}(\basepoint)$ consists of the three right cosets of $H$ in $\pione$.
This amounts in tagging the three right cosets with the numbers $1, 2, 3$ and identifying the elements of
$\prj^{-1}(\basepoint)$ and $\prjabstract^{-1}(\basepoint)$ with the same tag.
Observe that Proposition \ref{prop:the_covering_is_not_normal} implies that there is a unique tagging that will induce
the desired isomorphism.
The three right cosets of $H$ can be described as $H$, $Hc$ and $Hc^2$ and will be tagged as $1$, $3$ and $2$ respectively.
It is now sufficient to check that the action of the generators $a, b, c, d, e$ of $\pione$ on the fibers gives the
same permutation of the tagging (recall that in the abstract construction $\pione$ acts on the fiber
$\prjabstract^{-1}(\basepoint)$ as right multiplication).
By comparing Figures \ref{fig:covering_base} and \ref{fig:covering_cut} we see that $c$ corresponds to the cyclic
permutation $(1,3,2)$ on $\prj^{-1}(\basepoint)$ and the same is true in the abstract construction in view of the
chosen tagging.
{}From the definition of $H$ we see that $Ha = He = H$ whereas $Hca = Hce = Hc^2$, corresponding to the transposition
$(2,3)$ of the tagged abstract fiber, the same as for the cut and paste construction.
Finally, the two generators $b$ and $d$ clearly act as the identify 
on both $\prj^{-1}(\basepoint)$
and $\prjabstract^{-1}(\basepoint)$.
\end{proof}

Remark \ref{rem:metric} clearly applies to this construction of the covering space, so that the isomorphism
of the covers constructed above is also an isometry.


\subsection{Structure of the branch curves.}\label{sec:completion}

The covering space $\covering$, viewed as a metric space with the metric locally induced by the
base space $\base$, can be completed into $\ccovering$ with the addition of \emph{branch curves}.
The projection $\prj$ then naturally extends to
$$
\prj : \ccovering \to \R^3
$$
which will now be a branched cover.

Of particular importance are the branch curves corresponding to Cauchy sequences that converge in
$\base$ to points belonging to the invisible loops $\loopone$ and $\looptwo$; 
their structure allows to construct functions $u : \covering \to \{0,1\}$
whose $p(J_u)$ does not wet $\iwires$, see Theorem \ref{teo:properties_of_sigma}.

As a direct consequence of the cut and paste construction of the cover, in particular of the fact
that the permutation of sheets associated to the two disks fixes sheet $1$ and swaps sheets $2$ and $3$,
we have the following

\begin{remark}
The inverse image $\prj^{-1}(\loopone)$ consists of two connected components,
$\prj^{-1}(\loopone) = \loopone^1 \cup \loopone^{23}$:
$\loopone^1$ being a curve containing no ramification points,
i.e. having a small tubular
neighborhood homeomorphic to its projection into a tubular neighborhood of $\loopone \subset \R^3$;
$\loopone^{23}$ being a ramification curve 
of index two, i.e.
having  a small tubular neighborhood that projects onto
its image as a branched cover of degree two.
Similar properties hold  for $\looptwo$.
Instead, the inverse image $\prj^{-1}(\edges)$ is connected with ramification index three.
\end{remark}

\section{The minimization problem}\label{sec:functional}
We refer to \cite{AmFuPa:00} for all details on functions of bounded variation;
we denote by $\HHH^\ell$
the $\ell$-dimensional Hausdorff measure in $\R^3$, for $\ell=1,2$.

For any specific (geometric) definition of $\base$ (and hence of $\covering$), we set
\begin{equation}\label{eq:domain0}
\domainzFb := \left\{ u \in BV(\covering; \{0,1\})
 : \sum_{y \in \prj^{-1}(x)} u(y) = 1 ~\text{for a.e. } x \in \base \right\}.
\end{equation}

Then we impose a ``Dirichlet'' boundary condition at infinity, 
and 
the domain of the
functional $\FFF$ is defined as

\begin{equation}\label{eq:domain}
\domainFb := \left\{
 u \in \domainzFb : u(y) = 1 ~\text{for a.e. } y \in \text{ sheet $1$ of } \prj^{-1}(x), |x| > C
\right\},
\end{equation}
for $C$ large enough such that the ball of radius $C$ compactly contains the solid wedge $\Wedge$.
In view of the fact that the covering is not normal (Proposition \ref{prop:the_covering_is_not_normal}),
the choice of the Dirichlet condition is now
quite important.
If there is no risk of confusion we shall often drop the dependency on $\base$ and simply write $\domainzF$ and $\domainF$ in place of
$\domainzFb$ and $\domainFb$.

Finally, the functional to be minimized is
$$
\FFF(u) =
\begin{cases}
\frac{1}{2} | Du |(\covering) \qquad & \text{ if } u \in \domainF,
\\
+\infty \qquad & \text{ otherwise. }
\end{cases}
$$
The presence of the constant $1/2$ is due to the fact that, 
if $u$ jumps at a point of a sheet, then the constraint in \eqref{eq:domain0}
forces
$u$ to jump also at the corresponding point (i.e., on the same fiber) of another sheet, 
while on the remaining sheet $u$ does not jump. $\vert Du\vert$
is the usual total variation for the scalar-valued
function $u$, and $|Du|(\covering)$ can be defined using a partition of unity
associated to a finite atlas of $\covering$ made of locally trivializing charts.

\noindent 
Given $u \in \domainzF$, we denote by 
 $J_u \subset \covering$  the jump set of $u$.

\begin{definition}
A ``film surface'' is defined as
$\prj(J_u) \subset \base$, for $u \in \domainzF$.
\end{definition}

The film surface behaves well with respect to the jump set, 
in the sense that the total variation has the following representation,
which specifies in which sense we are considering the notion of area:

\begin{theorem}
For all $u \in \domainzF$ we have
$$
\FFF(u) = \HHH^2(\prj(J_u)).
$$
\end{theorem}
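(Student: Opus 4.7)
The goal is to rewrite $\frac{1}{2}|Du|(\covering)$, which is intrinsically defined on the cover, as a plain Hausdorff measure in $\R^3$ of the projected jump set. The strategy is a two step reduction: (i) identify the total variation with the Hausdorff measure of the jump set in $\covering$, and (ii) count the fiberwise multiplicity of $\prj$ restricted to $J_u$ and use that $\prj$ is a local isometry.

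\textbf{Step 1: total variation equals $\HHH^2(J_u)$ upstairs.} Since $u \in BV(\covering;\{0,1\})$, it is a characteristic function of a set of finite perimeter $E = \{u=1\}$ in the Riemannian manifold $\covering$. Using a locally finite atlas of trivializing charts (which exist by local triviality of the cover) and a subordinate partition of unity, the standard structure theorem for sets of finite perimeter (cf.\ \cite{AmFuPa:00}) yields that $Du$ is concentrated on the $\HHH^2$-rectifiable jump set $J_u \subset \covering$ and that $|Du| = \HHH^2 \llcorner J_u$. In particular
\[
|Du|(\covering) \,=\, \HHH^2(J_u).
\]

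\textbf{Step 2: fiberwise multiplicity of $\prj$ on $J_u$.} The key is that the constraint in \eqref{eq:domain0} forces exactly one of the three values $\{u(y): y \in \prj^{-1}(x)\}$ to equal $1$ and the other two to equal $0$, for a.e.\ $x \in \base$. Work in a small trivializing neighborhood $U \subset \base$ over which $\prj^{-1}(U)$ splits as three disjoint copies $U^1 \sqcup U^2 \sqcup U^3$. The restrictions $u_i := u|_{U^i}\circ (\prj|_{U^i})^{-1}$ are BV functions on $U$ with values in $\{0,1\}$ summing to $1$ a.e.; hence the three sets $E_i := \{u_i = 1\}$ partition $U$ up to a null set, and $\prj(J_u \cap \prj^{-1}(U))$ is $\HHH^2$-equivalent to the union of the reduced boundaries $\partial^\ast E_i \cap U$. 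At an $\HHH^2$-a.e.\ point $x \in \partial^\ast E_i$, one value (say $u_i$) jumps from $1$ to $0$ across the tangent hyperplane, which by the sum constraint forces exactly one of the other two functions, say $u_j$, to jump from $0$ to $1$, while the third is $0$ on both sides and does not jump. Thus, $\HHH^2$-a.e.\ point of $\prj(J_u)\cap U$ has exactly two preimages in $J_u$.

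\textbf{Step 3: area formula on the film.} Because $\prj$ is a local isometry (Remark \ref{rem:metric}), on the rectifiable set $J_u$ the tangential Jacobian of $\prj$ equals $1$ $\HHH^2$-a.e., and the set $\prj(J_u)$ is itself $\HHH^2$-rectifiable in $\base \subset \R^3$. The area formula for Lipschitz maps between rectifiable sets then gives
\[
\HHH^2(J_u) \,=\, \int_{\prj(J_u)} \#(J_u \cap \prj^{-1}(x))\,d\HHH^2(x) \,=\, 2\,\HHH^2(\prj(J_u)),
\]
by Step 2. Combining with Step 1,
\[
\FFF(u) \,=\, \tfrac{1}{2}|Du|(\covering) \,=\, \tfrac{1}{2}\HHH^2(J_u) \,=\, \HHH^2(\prj(J_u)),
\]
which is the claimed identity. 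Patching the local computation to a global one is routine via the partition of unity used to define $|Du|(\covering)$.

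\textbf{Main obstacle.} The substantive point is Step 2: turning the pointwise, almost everywhere, arithmetic constraint $u_1+u_2+u_3=1$ into the statement that exactly two sheets jump at $\HHH^2$-a.e.\ point of the projected jump set. One has to argue at the level of approximate limits/traces of BV functions rather than at individual points, which is why the reduced boundary formulation (giving two well-defined traces $0$ and $1$ on each side) is the right language. The remaining ingredients (that $\prj$ restricted to $J_u$ is a local isometry and that the area formula applies to rectifiable sets) are standard once the local trivialization and Remark \ref{rem:metric} are in place.
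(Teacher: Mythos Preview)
Your proof is correct and follows essentially the same approach as the paper: the paper's own proof simply cites \cite[Lemma 2.12]{AmBePa:15} and mentions local parametrizations of $\covering$ together with $2$-rectifiability of the jump set, which are exactly the ingredients you use. Your write-up is a fully spelled out version of that referenced argument, with the fiberwise multiplicity count in Step 2 being the main point beyond the standard BV structure theory.
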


\begin{proof}
It is enough to repeat the arguments of \cite[Lemma 2.12]{AmBePa:15}, 
by using local parametrizations of $Y$, and 2-rectifiability of
the jump set of a $BV$-function.
\end{proof}

\begin{definition}
For a given geometry of $\base$, 
the minimum value
of $\FFF$ depends on $\base$; we set
$$
\minFb := \inf_{u \in \domainzFb} \FFF(u)
$$
and
$$
\superficiminime(\base) := \{\prj(J_u) : u \in \domainzFb \text{ and } \FFF(u) = \minFb\} .
$$
\end{definition}

By the semicontinuity and compactness properties of $\FFF$, the infimum 
on the right hand side
is actually a minimum: this can be proven arguing as in 
\cite{AmBePa:15}.
We shall denote by $\umin = \umin(\base)$ a function such that
$\FFF(\umin) = \minFb$ and by
$\superficieminima = \prj(J_{\umin}) = \superficieminima(\base) \in \superficiminime(\base)$
the corresponding film surface ($BV$-minimizer). 
In particular the set of minimizing film surfaces $\superficiminime(\base)$ is nonempty.

\begin{theorem}[Wetting condition]\label{teo:properties_of_sigma}
Given $u \in \domainFb$, the set $\projectionofjumpofu$ satisfies the 
following properties:

\begin{itemize}
\item[\propertyone]
any closed curve that loops around a long edge $\longone$, $\longtwo$, $\longthree$ or $\longfour$ intersects
$\prj(J_u)$;
\item[\propertytwo]
any closed curve that loops around a short edge $\shortone$ or $\shorttwo$ at a distance
smaller than $\wiredistM$ intersects $\projectionofjumpofu$.
\end{itemize}
In particular,  it is possible that a closed curve around $\shortone$ or $\shorttwo$ does
not intersect $\projectionofjumpofu$.
\end{theorem}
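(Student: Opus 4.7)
The plan is to argue by contradiction via a monodromy computation. Suppose a closed curve $\gamma\subset\base$ is disjoint from $\prj(J_u)$. Then each of its three lifts $\widetilde\gamma_i$ to $\covering$ (starting at the three points $y_i$ of the fiber over $\gamma(0)$) avoids $J_u$, so $u$ is constant along each lift. Writing $\widetilde\gamma_i(1)=y_{\sigma(i)}$, where $\sigma$ is the monodromy permutation of the fiber, and using the constraint in \eqref{eq:domain0} (which, since $u\in\{0,1\}$, forces exactly one point $y^*$ of the fiber to satisfy $u(y^*)=1$), the identity $u(y^*)=u(\sigma(y^*))$ yields $\sigma(y^*)=y^*$. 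Conclusion: \emph{if the monodromy of $\gamma$ acts freely on the three-element fiber, then $\gamma$ must intersect $\prj(J_u)$.} Both \propertyone{} and \propertytwo{} will be reduced to checking that the relevant monodromy is a $3$-cycle.

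For \propertyone{} I would read off the cut-and-paste picture of Figure \ref{fig:covering_cut}: a tight loop around any piece $L_{j,k}$ of a long edge crosses only the large cut surface, and exactly once, so its monodromy is the cyclic permutation $(1,3,2)$ (or its inverse). The same conclusion is obtained algebraically by substituting the permutations $\sigma_a,\dots,\sigma_e$ of Section \ref{sec:abstract} into the words of \eqref{eq:arcs}. Any curve that geometrically loops around $L_j$ is freely homotopic in $\base$ to such a tight loop, since a small enough tubular neighborhood of $L_j$ meets no other arc of $\edges\cup\iwires$; conjugation preserves cycle type, hence the monodromy is still a fixed-point-free $3$-cycle and the engine of the first paragraph concludes. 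For \propertytwo{} the role of $\wiredistM$ is precisely to confine $\gamma$ to the open $L^\infty$-tube $U=\{x:\|x-S_i\|_\infty<\wiredistM\}$, which by the definition of $\wiredistM$ is disjoint from $\loopone\cup\looptwo$; away from the two endpoints of $S_i$, the set $U\setminus S_i\subset\base$ deformation-retracts onto a tight loop around $S_i$, whose word $ad^{-1}ca^{-1}c^{-1}$ (respectively $bc^{-1}ece^{-1}$) from \eqref{eq:arcs} again has monodromy $(1,3,2)$, a $3$-cycle.

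The final sharpness remark would be witnessed by a loop winding once around $S_i$ and once around the neighboring wire $\loopone$ or $\looptwo$: the monodromy is then the product of the $3$-cycle with the transposition $(2,3)$, hence itself a transposition fixing one sheet, and a compatible $u$ can be arranged so that the marked fiber point lies on that sheet along the loop. The main obstacle I anticipate is the justification, inside the proof of \propertytwo, that the distance bound $\wiredistM$ genuinely isolates the tight-$S_i$ homotopy class in $\base$, i.e., that inside $U$ the curve $\gamma$ cannot absorb an extra factor of $a$ or $e$ (a detour around a wire), which would convert the $3$-cycle monodromy into a transposition with a fixed point. Everything else reduces to permutation arithmetic in $S_3$ and a direct appeal to the $\{0,1\}$-constraint \eqref{eq:domain0}.
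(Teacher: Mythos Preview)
Your proposal is correct and takes essentially the same approach as the paper: both argue that if $\gamma$ misses $\prj(J_u)$ then the monodromy of $\gamma$ must fix the unique sheet of the fiber carrying $u=1$, while the relevant meridians have monodromy a $3$-cycle and hence no fixed point. You supply more explicit detail (the permutation arithmetic via \eqref{eq:arcs} and Section~\ref{sec:abstract}, and the conjugacy-invariance of cycle type) where the paper simply asserts that the lifted path ``moves all sheets of the fiber''; your treatment of \propertytwo{} via the $L^\infty$-tube disjoint from $\iwires$ is exactly the paper's observation that such a loop ``cannot also interlace $\loopone$''.
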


By ``loop around an edge'' we mean that it can be continuously deformed without crossing
any edge of the tetrahedron into a ``meridian'' of the edge, a loop that orbits around that
edge alone at a small distance.

\begin{proof}
Let $x \not\in \projectionofjumpofu$ and take the precise representative of $u$ (still
denoted by $u$, it satisfies the constraint in \eqref{eq:domain0} in $\base \setminus \projectionofjumpofu$).
By construction of the covering space, a loop%
\footnote{
To get an element of $\pione$ we need to select a path from $\basepoint$ to any point of
the loop.
The element of $\pione$ consists in first moving along such path, then following the loop
and finally going backwards to $\basepoint$ along the selected path.
This does not impact the reasoning above.
}
in $\base$ based at $x$ around anyone of the long edges lifts into
a path that moves all sheets of the fiber over $x$, in particular it moves
the fiber where $u = 1$,  taking it into a fiber where $u = 0$ (condition in \eqref{eq:domain0}).
This forces $u$ to jump along the curve obtained by lifting the loop
and gives \propertyone.
Property \propertytwo\ is proved similarly by observing that a curve that loops around,
say $\shortone$ at a distance smaller than $\wiredistM$ cannot also interlace $\loopone$
and again when lifted in the covering space it moves all points of the fiber.
\end{proof}

The following definition is of central importance and highlights the essential
feature of minimizers in order to be a ``least area soap film'' for an elongated tetrahedron.

\begin{definition}[Non-wetting condition]\label{def:NW}
For a given geometry $\base$ we say that $\surfacewetting \in \superficiminime(\base)$
satisfies condition (NW) (non-wetting condition) if it does not intersect the invisible wires:
$$
\surfacewetting \cap \iwires = \varnothing .
$$
We say that the base space $\base$ satisfies condition (NW) if
$$
\surfacewetting \cap \iwires = \varnothing \qquad \forall \surfacewetting \in \superficiminime(\base) .
$$
\end{definition}

\begin{remark}\label{rem:taylor}
By compactness, if $\surfacewetting \in \superficiminime(\base)$ satisfies property (NW), there exists 
$\delta > 0$ such that Lipschitz deformations of $\surfacewetting$ in $\R^3 \setminus \edges$ which are the identity
out of a neighbourhood of $\surfacewetting$ of 
size less than
$\delta$ will not touch $\iwires$, and can be recovered as jump set of some $u \in \domainF$.
Hence $\surfacewetting$ is $({\bf M}, 0, \delta)$-minimal in the sense of F.J. Almgren \cite{Al:76}
and in particular satisfies the conditions proved by J. Taylor \cite{Ta:76} of being locally
either a minimal surface (zero mean curvature) or three minimal surfaces meeting along a curve
at $120^\circ$.
No $T$-singularity (quadruple point) can be present as a consequence of having only three sheets in the constructed
covering.
Moreover it is clear that $\surfacewetting$ is not simply connected: any closed curve that loops around the front face of
the tetrahedron along the edges, has nontrivial linking number with $\looptwo$ and therefore cannot be
shrunk to a point by deformations on $\surfacewetting$.
\end{remark}

\subsection{Estimate of \texorpdfstring{$\minFb$}{min F(M)} from below}
A crude estimate from below of $\minFb$
is a direct consequence of property \propertyone\ above, indeed
property \propertyone\ is also satisfied by the minimal
surface $\superficieskew$ that spans
the skew quadrilateral defined by the long edges $\longs_i$, $i=1,2,3,4$
(Figure \ref{fig:surfacefilmskew}). Hence
\begin{equation}\label{eq:first_crude_estimate_from_below}
\minFb \geq 
\HHH^2(\superficieskew).
\end{equation}

\begin{figure}
\includegraphics[width=0.9\textwidth]{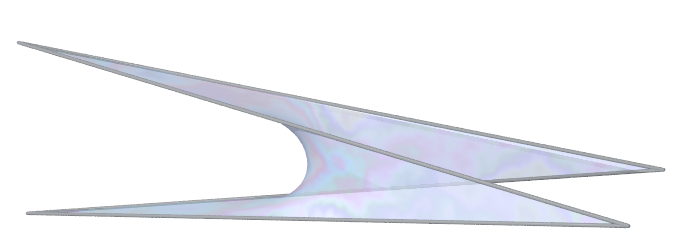}
\caption{\small{The minimal film spanning the long edges of $\tetrahedron$
($h = 3.5$).}}
\label{fig:surfacefilmskew}
\end{figure}

\begin{remark}
We have 
\begin{equation}\label{eq:bound_of_Sigma_skew}
2h
\leq
\HHH^2(\superficieskew)
\leq
\sqrt{4 h^2 + 1} + 1 .
\end{equation}
The lower bound can be obtained by reasoning as in Theorem \ref{teo:lowerbound} below, whereas
the upper bound is the area of the surface obtained by taking the upper and lower
faces of $\tetrahedron$ for $x < 0$,
the central (unit) square $\tetrahedron \cap \{x = 0\}$ and
the front and back faces for $x > 0$.
\end{remark}
Set $\base = \basehs$ for a given choice of $h$ and $s$.
For any $u \in \domainF$, 
the projection $\projectionofjumpofu$ of the jump set of $u$
satisfies properties \propertyone\ and \propertytwo\ of Theorem \ref{teo:properties_of_sigma}.
This allows us to obtain an estimate from below of its $\HHH^2$ measure,
which  refines estimates \eqref{eq:first_crude_estimate_from_below}-\eqref{eq:bound_of_Sigma_skew}, 
see formula \eqref{eq:better_estimate_from_below}.

For $t \in [-1,1]$ take the plane $\pi_t = \{x = ht\}$.
Its intersection $\Rectangle_t$ with the wedge $\Wedge$ is a rectangle of sides $1+t$ and $1-t$.
We shall derive an estimate from below of $\HHH^1(\projectionofjumpofu \cap \pi_t)$.

\subsubsection{Case $|t| > s$}
Since $\wiredistM = h(1-s)$, we have that the $L^\infty$-distance of $\pi_t$ from $\shorttwo$ if $t > 0$
(resp. $\shortone$ if $t < 0$) is
less than $\wiredistM$.
As a consequence, any curve in the rectangle $\Rectangle_t$ that connects its two long sides can be closed as a loop
around $\shorttwo$ (resp. $\shortone$) at an $L^\infty$-distance smaller than $\wiredistM$ and, in view of \propertytwo\
of Theorem \ref{teo:properties_of_sigma}, is forced to intersect
$\projectionofjumpofu\cap \pi_t$.
This, together with the first property, is enough to conclude\footnote{
We have a ``minimal partition'' problem for the rectangle $R_t$ into $3$ sets, say
$A$, $B$, $C$ with the requirement that one long edge is contained in $\overline A$,
the opposite long edge is contained in $\overline B$ and both short edges are contained
in $\overline C$.
Since the local structure of a minimizer (boundary of an optimal partition) must satisfy the properties of
a Steiner network, we only have a finite (and very small) set of possible configurations to consider.}
%
that the length
of $\projectionofjumpofu \cap \pi_t$ cannot be less than both the length
of the Steiner tree joining the four vertices of $\Rectangle_t$ and twice the
length of the long sides of $\Rectangle_t$.
Hence
\begin{equation}
\label{eq:bound0_using_Steiner}
\begin{aligned}
\HHH^1(\projectionofjumpofu \cap \pi_t) \geq &
\min \{
  1 + \sqrt{3} - (\sqrt{3} - 1) |t|,
  2 + 2|t|
  \} 
\\
= &
 \begin{cases}
  2 + 2|t| & \text{if } |t| < 2 - \sqrt{3} ,
 \\
  1 + \sqrt{3} - (\sqrt{3} - 1) |t| & \text{if } |t| \geq 2 - \sqrt{3}.
 \end{cases}
\end{aligned}
\end{equation}

\subsubsection{Case $|t| \leq s$}
We can still enforce \propertyone\ of Theorem \ref{teo:properties_of_sigma}:
any curve in $\Rectangle_t$ connecting two adjacent sides can be completed into a loop around
one of the long edges $\longs_i$, $i \in \{1,2,3,4\}$ and hence it must intersect $\projectionofjumpofu \cap \pi_t$.
It follows that the size of $\projectionofjumpofu \cap \pi_t$ cannot be less than twice the lenght of the
short sides of $\Rectangle_t$:
\begin{equation}\label{eq:bound1_using_sides}
\HHH^1(\projectionofjumpofu \cap \pi_t) \geq 2 - 2t .
\end{equation}

\begin{theorem}\label{teo:lowerbound}
For a given choice of $h$ and $s$, we have:
\begin{equation}\label{eq:better_estimate_from_below}
\minFbhs \geq
\begin{cases}
2h( 4 - \sqrt{3} - 2s^2) & \text{if } s < 2 - \sqrt{3} ,
\\
h[3 + \sqrt{3} - 2(\sqrt{3}-1)s - (3 - \sqrt{3})s^2] & \text{if } s \geq 2 - \sqrt{3} .
\end{cases}
\end{equation}
\end{theorem}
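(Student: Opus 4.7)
The plan is to integrate the pointwise slice-wise lower bounds \eqref{eq:bound0_using_Steiner} and \eqref{eq:bound1_using_sides} just established, using a coarea-type inequality to pass from the lengths of slices to the area of $\projectionofjumpofu$. Specifically, for any $u\in\domainFb$, the film surface $\projectionofjumpofu\subset\R^3$ is $\HHH^2$-rectifiable and contained in $\tetrahedron$, so Eilenberg's inequality applied to the $1$-Lipschitz projection onto the first coordinate yields
$$\HHH^2(\projectionofjumpofu) \;\geq\; \int_{-h}^{h}\HHH^1\bigl(\projectionofjumpofu\cap\{x=x_0\}\bigr)\,dx_0 \;=\; h\int_{-1}^{1}\HHH^1(\projectionofjumpofu\cap\pi_t)\,dt,$$
after the change of variable $x_0=ht$. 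Substituting \eqref{eq:bound1_using_sides} on $|t|\leq s$ and \eqref{eq:bound0_using_Steiner} on $|t|>s$ reduces the theorem to an elementary integration in $t$.

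For $s<2-\sqrt 3$ the piecewise bound on $\HHH^1(\projectionofjumpofu\cap\pi_t)$ takes three distinct forms on $[0,1]$: $2-2|t|$ for $|t|\leq s$, $2+2|t|$ for $s<|t|\leq 2-\sqrt 3$, and $1+\sqrt 3-(\sqrt 3-1)|t|$ for $|t|>2-\sqrt 3$. Exploiting the symmetry $t\leftrightarrow -t$, the integral on $[-1,1]$ equals twice the sum of the three corresponding integrals on $[0,1]$. A routine computation (using $(2-\sqrt 3)^2=7-4\sqrt 3$) shows that the boundary contributions at $t=s$ and $t=2-\sqrt 3$ collapse, leaving the clean value $4-\sqrt 3-2s^2$; multiplying by $2h$ gives the first branch of \eqref{eq:better_estimate_from_below}.

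For $s\geq 2-\sqrt 3$ the middle interval is empty, so only the regimes $|t|\leq s$ and $|t|>s$ contribute. The analogous integration yields $\tfrac12\bigl[3+\sqrt 3-2(\sqrt 3-1)s-(3-\sqrt 3)s^2\bigr]$ for the integral on $[0,1]$, and multiplication by $2h$ gives the second branch. Passing to the infimum over $u\in\domainFb$ then establishes the claimed lower bound for $\minFbhs$. I do not expect a genuine obstacle here: the substantive geometric work has already been carried out in Theorem~\ref{teo:properties_of_sigma} and in the derivation of \eqref{eq:bound0_using_Steiner}--\eqref{eq:bound1_using_sides}, so what remains is careful bookkeeping of the three integration intervals and of the matching at $|t|=s$ and $|t|=2-\sqrt 3$, plus the sanity check that the slicing inequality is being invoked in the correct direction (a lower bound for the $2$-dimensional area in terms of lengths of $1$-dimensional slices).
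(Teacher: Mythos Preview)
Your proposal is correct and follows essentially the same route as the paper: apply a coarea-type inequality (the paper invokes the tangential coarea formula, you cite Eilenberg's inequality) to bound $\HHH^2(\projectionofjumpofu)$ from below by the integral of the slice lengths, then plug in \eqref{eq:bound0_using_Steiner} and \eqref{eq:bound1_using_sides} on the appropriate subintervals and compute. One small inaccuracy: you assert $\projectionofjumpofu\subset\tetrahedron$, which is neither proved nor needed---restricting the outer integral to $x\in[-h,h]$ already gives a valid lower bound regardless of where the jump set lives.
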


\begin{proof}
Let $u \in \domainF$.

\textit{Case $s < 2 - \sqrt{3}$.}
Using the tangential coarea formula
\cite[Theorem 3, pag. 103]{GiMoSo:98} and the sectional estimates \eqref{eq:bound0_using_Steiner}
and \eqref{eq:bound1_using_sides}, we have
$$
\begin{aligned}
\HHH^2(\projectionofjumpofu) \geq &
 \int_{-h}^h \HHH^1(\projectionofjumpofu \cap \pi_t) ~dt
\\
 \geq &
  2h \int_0^s ( 2 - 2t ) ~dt
\\
 \phantom{ppp} & + 2h \int_s^{2-\sqrt{3}} ( 2 + 2t ) ~dt
\\
 \phantom{ppp} & + 2h \int_{2-\sqrt{3}}^1 \left[ 1 + \sqrt{3} - (\sqrt{3} - 1) t \right] ~dt
\\
 = &
  2h[(2s - s^2) + (11 - 6\sqrt{3} - s^2 - 2s) + (5\sqrt{3} - 7)]
\\
 = &
  2h( 4 - \sqrt{3} - 2s^2).
\end{aligned}
$$
\textit{Case $s \geq 2 - \sqrt{3}$.}
The intermediate integral now disappears.  We get
$$
\begin{aligned}
\HHH^2(\projectionofjumpofu) \geq &
 \int_{-h}^h \HHH^1(\projectionofjumpofu \cap \pi_t) ~dt
\\
 \geq &
  2h \int_0^s ( 2 - 2t ) ~dt
\\
 \phantom{ppp} & + 2h \int_s^1 \left[ 1 + \sqrt{3} - (\sqrt{3} - 1) t \right] ~dt
\\
 = &
  h[3 + \sqrt{3} - 2(\sqrt{3}-1)s - (3 - \sqrt{3})s^2].
\end{aligned}
$$
\end{proof}

Note that for $s \to 0^+$ we obtain $\HHH^2(\projectionofjumpofu) \geq 2h(4 - \sqrt{3})$
and for $s \to 1^-$ we obtain $\HHH^2(\projectionofjumpofu) \geq 2h$ (compare
with \eqref{eq:bound_of_Sigma_skew}).

\section{A positive genus surface beating the conelike configuration}\label{sec:comparison}
For a given $h > 0$ and $0 < s < 2-\sqrt{3}$ we stick here with the choice of
$\Wedge$ (a solid elongated tetrahedron) given by Definition \ref{def:geometry},
i.e. with vertices having coordinates as in \eqref{eq:coordinates_vertices}, 
see Figure \ref{fig:surface2}, the regular tetrahedron corresponding to the choice $h = \frac{\sqrt{2}}{2}$.

Let us
denote by $\superficieconica$ a
``conelike'' film surface spanning the one-skeleton 
of $\Wedge$.

\begin{definition}[Cone-like surface]\label{def:conelike}
By ``conelike set'' (or ``conelike film surface'' if it has the appearance of a soap film)
spanning the edges $\edges$ of $\Wedge$ we mean a set
$\superficieconica$ that in $\Wedge$ separates the
four faces, i.e. such that it must intersect any path starting on one face, travelling in the interior of
$\Wedge$ and terminating on another face.
\end{definition}

The name ``cone-like'' is justified by the fact that we espect a minimal film with such
separation property to be a deformed version of the minimizing cone of Figure \ref{fig:morgan} (left).
In particular, a simply connected set in $\Wedge$ containing $\edges$ must separate the faces.
Indeed, by contradiction if it does not separate the faces we can construct a closed path disjoint
from the set that interlaces the path along
the edges of a face.
The set would then be non-simply connected (in particular non-contractible).

In Figures \ref{fig:morgan} left and Figure \ref{fig:surfacefilmc} we find two examples for a
regular tetrahedron and an elongated tetrahedron.\footnote{Note that for an elongated tetrahedron,
an area-minimizing $\superficieconica$ does {\it not} 
satisfy the usual property of cones, of being invariant under 
multiplication $x \to r x$ for $r>0$, see the caption of 
Figure \ref{fig:surfacefilmc}. This is the reason for calling 
$\superficieconica$ a conelike configuration, and not simply a cone.} 
We shall compare $\superficieconica$
with a
particular competitor $\particularsurface$ corresponding to (i.e., being the projection of) 
the jump set of a $BV$ function $u$ in the domain 
of the functional $\mathcal F$ (Theorem \ref{teo:there_exists_u_that_projects});
the competitor $\particularsurface$ will be non-simply connected.

We shall show that there exists
$h > 1$ sufficiently large such that the area of $\particularsurface$ is less than 
the area of $\superficieconica$ (Theorem \ref{teo:comparison}), giving quite strong
evidence that $\superficieconica$ is not
area-minimizing among minimal films if we allow for a more complex topology.

\subsection{Constructing the surface \texorpdfstring{$\particularsurface$}{Sigma2} using the triple cover}\label{sec:sigma2}

Let $\tau \in (s,1)$ be a parameter to be chosen later, see \eqref{eq:tau_small_enough}.
The competitor  $\particularsurface$ is constructed by joining five pieces,
\begin{equation}\label{eq:particularsurface}
\particularsurface = \Sigma_{1} \cup \Sigma_{2} \cup \Sigma_{3} \cup \Sigma_{4}
\cup \Sigma_{v},
\end{equation}
the first four
 obtained by sectioning the wedge with the three planes
$\{x = 0\}$, $\{x = \pm h\tau\}$,
see Figure \ref{fig:surface2}, and the last one being ``vertical'',
as follows:

{\it Case $x \in (-h, -h\tau)$ and $x \in  (h\tau, h)$}: the surface $\Sigma_{i}$, $i \in \{1,4\}$
is chosen coincident with $\superficieconica$,
more precisely
$\Sigma_{1} := \superficieconica \cap \{ x < -h\tau \}$ and 
$\Sigma_{4} := \superficieconica \cap \{ x > h\tau \}$.

{\it Case $x \in  (0,h\tau)$}: the surface 
$\Sigma_{3}$ coincides with the top and bottom faces of $\Wedge$.

{\it Case $x \in (-h\tau, 0)$}: the surface $\Sigma_{2}$ 
coincides with the front and back faces of $\Wedge$.

In order to close the surface we need to add three ``vertical'' pieces,
cumulatively denoted by $\Sigma_{v}$ (see Figure \ref{fig:sections}), union of 
the square
obtained by intersecting $\Wedge$ with the vertical plane $\{x=0\}$ and 
of the parts of the
two rectangles resulting as the intersection of $\Wedge$ with
the two planes $\{x = \pm h\tau\}$.
\begin{figure}
\includegraphics[width=0.9\textwidth]{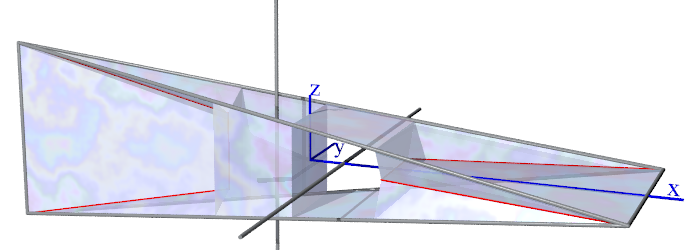}
\caption{\small{Sketch of $\particularsurface$ with $\tau = 2 - \sqrt{3}$ and $h = 3.5$.}}
\label{fig:surface2}
\end{figure}

\begin{figure}
\includegraphics[width=0.6\textwidth]{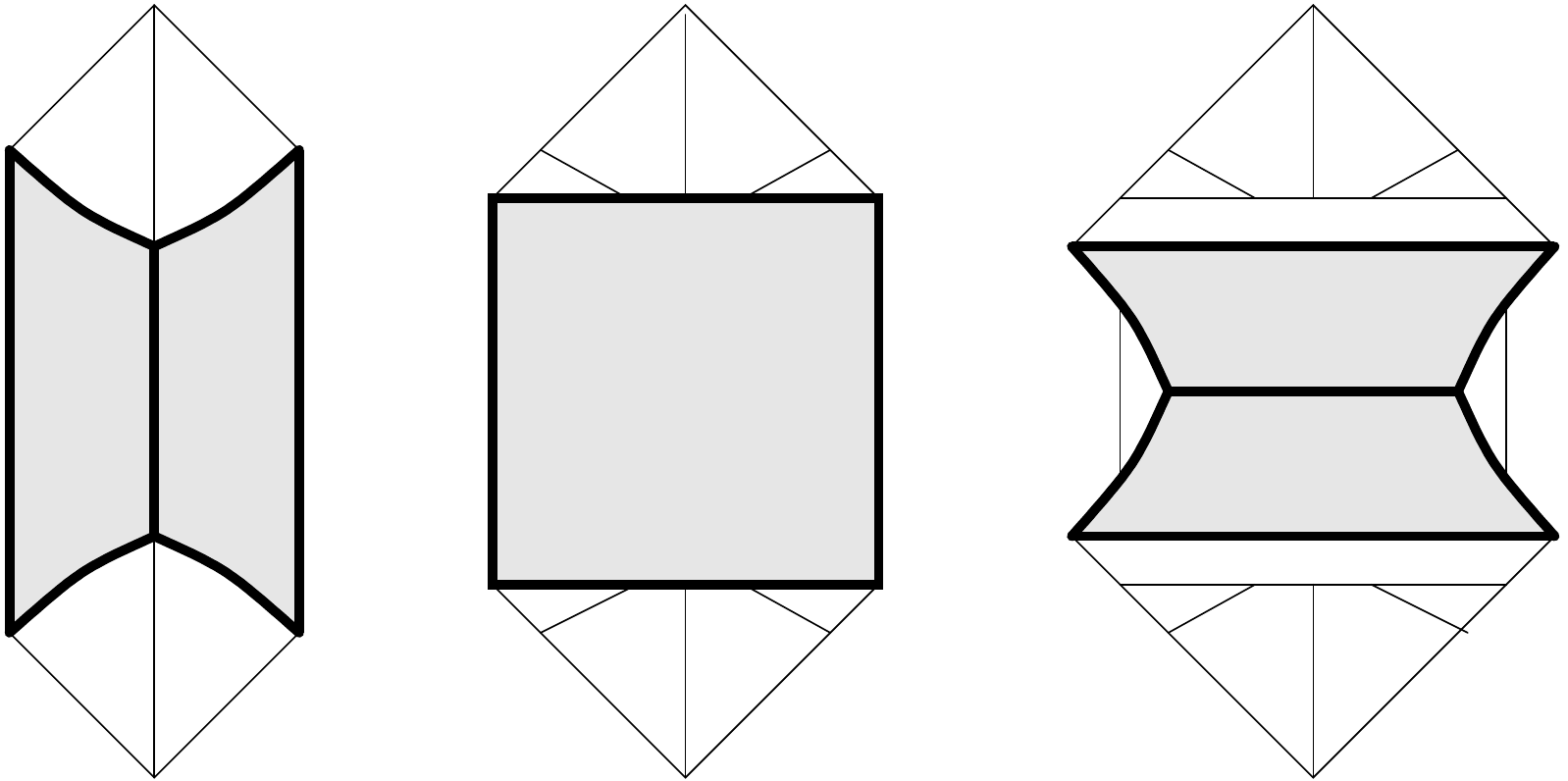}
\caption{\small{In grey the three components of $\Sigma_{v}$, vertical sections of $\particularsurface$ at
$x = -h\tau$ (left), $x = 0$ (center), $x = h\tau$ (right). The area of
these grey regions is estimated from above by their convex envelopes (see
\eqref{eq:stima_area_parte_verticale}).
}}
\label{fig:sections}
\end{figure}

\begin{theorem}\label{teo:comparison}
Let $s \in (0,2-\sqrt{3})$. If  $\tau \in (s,1)$ is small enough
 and $h \in (1,+\infty)$ is large enough depending on $\tau$,
we have
\begin{equation}\label{eq:comparison}
\HHH^2(\particularsurface) < \HHH^2(\superficieconica).
\end{equation}
\end{theorem}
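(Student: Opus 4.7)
The plan is to compare the two surfaces only inside the slab $\{|x|\leq h\tau\}$. By construction $\Sigma_{1}\cup\Sigma_{4}=\superficieconica\cap\{|x|>h\tau\}$, so \eqref{eq:comparison} is equivalent to
\[
\HHH^{2}(\Sigma_{2}\cup\Sigma_{3}\cup\Sigma_{v}) < \HHH^{2}\bigl(\superficieconica\cap\{|x|\leq h\tau\}\bigr).
\]
I will estimate the right-hand side from below and the left-hand side from above by expressions that are linear in $h$ to leading order, with explicit coefficients in $\tau$. For $\tau$ in a suitable range the coefficients will compare strictly in the right direction, and then $h$ can be chosen large enough to swallow the lower-order corrections.

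For the lower bound I would use Definition \ref{def:conelike}: since $\superficieconica$ separates the four faces of $\Wedge$, by a standard slicing argument the section $\superficieconica\cap\pi_{t}$ is, for a.e.\ $t\in(-1,1)$, a one-dimensional set in $\Rectangle_{t}$ that separates the four sides of the rectangle. The infimal length of such a separating set is the Steiner tree of its four corners, whose length for a rectangle with sides $1-|t|$ and $1+|t|$ equals $1+\sqrt{3}-(\sqrt{3}-1)|t|$ (this is the Steiner alternative in \eqref{eq:bound0_using_Steiner}, which is always the relevant one here because full four-way separation is required). Integrating over $|t|\leq \tau$ via the tangential coarea formula with $x=ht$ gives
\[
\HHH^{2}\bigl(\superficieconica\cap\{|x|\leq h\tau\}\bigr) \geq 2h\tau(1+\sqrt{3})-(\sqrt{3}-1)h\tau^{2}.
\]

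For the upper bound I would compute $\HHH^{2}(\Sigma_{2})+\HHH^{2}(\Sigma_{3})$ exactly: each of the four planar pieces is a trapezoid on a triangular face of $\Wedge$ (total face area $\sqrt{1+4h^{2}}$), parametrized by $x$ in an interval of length $h\tau$ with cross-section length $1\pm x/h$. A direct integration yields
\[
\HHH^{2}(\Sigma_{2})+\HHH^{2}(\Sigma_{3}) = \sqrt{1+4h^{2}}\,\tau(2+\tau).
\]
For $\Sigma_{v}$, each of the three planar components lies inside a single rectangular cross-section of $\Wedge$ of area at most $1$; replacing each component by its convex envelope (or, bluntly, by the whole rectangle) gives a uniform estimate $\HHH^{2}(\Sigma_{v})\leq C_{0}$ with $C_{0}$ independent of $h$.

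Combining the three estimates, dividing by $h$ and letting $h\to\infty$, \eqref{eq:comparison} reduces in the limit to
\[
2\tau(2+\tau) < 2\tau(1+\sqrt{3})-(\sqrt{3}-1)\tau^{2},
\]
equivalently $(1+\sqrt{3})\tau<2(\sqrt{3}-1)$, i.e.\ $\tau<(\sqrt{3}-1)^{2}=4-2\sqrt{3}$. Since the hypothesis $s\in(0,2-\sqrt{3})$ forces $s<4-2\sqrt{3}$, the interval $(s,\,4-2\sqrt{3})$ is non-empty. Fixing any $\tau$ in it, the asymptotic margin on the right-hand side is strictly positive and linear in $h$, while the errors—the $O(1/h)$ discrepancy between $\sqrt{1+4h^{2}}$ and $2h$ in the upper bound, and the constant $C_{0}$ coming from $\Sigma_{v}$—are $O(1)$; hence taking $h$ sufficiently large concludes the proof. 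The main obstacle I anticipate is the first step: justifying rigorously that the topological separation property of $\superficieconica$ inside $\Wedge$ passes to a one-dimensional separation property at almost every slice $\pi_{t}$, so that the Steiner-tree lower bound really applies. Once this is in place, the remaining work is an elementary quantitative computation in which the strict inequality is driven by the Steiner constant $\sqrt{3}$ exceeding $1$.
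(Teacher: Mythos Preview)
Your proposal is correct and follows essentially the same route as the paper: reduce to the slab $\{|x|\leq h\tau\}$, bound $\HHH^{2}(\superficieconica)$ from below via the Steiner-tree estimate on slices, compute $\HHH^{2}(\Sigma_{2})+\HHH^{2}(\Sigma_{3})=(2+\tau)\tau\sqrt{4h^{2}+1}$ and bound $\HHH^{2}(\Sigma_{v})$ by a constant (the paper uses $C_{0}=3$), then divide by $h$ and let $h\to\infty$ to obtain the decisive inequality $\tau<2(2-\sqrt{3})$. The ``main obstacle'' you flag---passing the four-way separation to a.e.\ slice---is handled in the paper exactly as you suggest, by observing that $\superficieconica$ partitions $\Wedge$ into four regions so that $\superficieconica\cap\pi_{t}$ partitions $\Rectangle_{t}$ into four regions one per side.
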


\begin{proof}
We have to show that 
\begin{equation}\label{eq:what_we_have_to_show}
\HHH^2(\Sigma_1) +  
\HHH^2(\Sigma_2) + \HHH^2(\Sigma_3) + \HHH^2(\Sigma_4) + \HHH^2(\Sigma_v) < \HHH^2(\superficieconica),
\end{equation}
provided $\tau \in (s,1)$ is small enough and $h = h(\tau)\in (1,+\infty)$ is large enough.

In view of the definition of $\particularsurface$ we have
$$
\HHH^2(\superficieconica \cap \{ |x| > h\tau \}) = \HHH^2(\particularsurface \cap \{ |x| > h\tau \}),
$$
and therefore inequality \eqref{eq:what_we_have_to_show} is equivalent to 
\begin{equation}\label{eq:what_equivalently_we_have_to_show}
\HHH^2(\Sigma_2) + \HHH^2(\Sigma_3) + \HHH^2(\Sigma_v) < 
\HHH^2(\superficieconica\cap \{-h\tau < x < h\tau\}).
\end{equation}
As in Section \ref{sec:functional}, 
for $t \in [0,1]$,
the intersection 
of the plane 
 $\pi_t = \{x = ht\}$ 
 with the wedge $\Wedge$ is a rectangle of sides $1+t$ and $1-t$.
Since $\superficieconica$ divides $\Wedge$ into four disjoint solid regions,
 one
per face, it follows that $\superficieconica \cap \pi_t$ divides the rectangle
into four disjoint regions.
Hence
\begin{equation}\label{eq:bound_using_Steiner}
\HHH^1(\superficieconica \cap \pi_t) \geq 1 + \sqrt{3} - (\sqrt{3} - 1) t,
\end{equation}
the right hand side 
being the length of the Steiner tree joining the four
vertices of the rectangle.

For a given $\tau \in (s,1)$ we shall need a 
bound from below of the section
$\superficieconica \cap \{ -h\tau < x < h\tau \}$:
using the coarea formula 
and \eqref{eq:bound_using_Steiner}, we have 
$$
\begin{aligned}
\HHH^2(\superficieconica\cap \{-h\tau 
< x < h\tau\}) 
) \geq & \int_{-h}^h \HHH^1(\superficieconica\cap \{-h\tau 
< x < h\tau\}
\cap \pi_t) ~dt
\\
 \geq & 
2 h \int_0^{\tau} \left(1 + \sqrt{3} - (\sqrt{3} - 1) t\right) ~dt
\\
= &
2(1 + \sqrt{3}) h \tau - (\sqrt{3} - 1) h \tau^2.
\end{aligned}
$$
Therefore, in order to show \eqref{eq:what_equivalently_we_have_to_show} it is sufficient to prove
\begin{equation*}
\HHH^2(\Sigma_2) + \HHH^2(\Sigma_3) + \HHH^2(\Sigma_v) < 
2(1 + \sqrt{3}) h \tau - (\sqrt{3} - 1) h \tau^2.
\end{equation*}

Since all intersection rectangles have the same perimeter and the central square has area equal to one, we
have 
\begin{equation}\label{eq:stima_area_parte_verticale}
\HHH^2(\Sigma_{v}) \leq 3,
\end{equation}
and so it will be sufficient to prove
\begin{equation}\label{eq:what_in_turn_will_be_sufficient}
\HHH^2(\Sigma_2) + \HHH^2(\Sigma_3) + 3 < 
2(1 + \sqrt{3}) h \tau - (\sqrt{3} - 1) h \tau^2.
\end{equation}

Now, the area of the top (or bottom)  facet $F$ of $\Wedge$ (the one
having the vertex on the left and the basis on the right) equals
 $\sqrt{4 h^2 + 1}$, therefore
$$
\begin{aligned}
\HHH^2(F \cap \{x < h\tau\})= &
\frac{(1 + \tau)^2}{4} \sqrt{4 h^2 + 1},
\\
\HHH^2(F \cap \{0 < x < h\tau\})
= &
\frac{(1 + \tau)^2}{4} \sqrt{4 h^2 + 1} - \frac{1}{4} \sqrt{4 h^2 + 1}.
\end{aligned}
$$
It follows
$$
\HHH^2(\Sigma_{2}) + \HHH^2(\Sigma_{3}) 
= 
4 \HHH^2(F \cap \{0 < x < h\tau\}) =(2 + \tau) \tau \sqrt{4 h^2 + 1},
$$
so that \eqref{eq:what_in_turn_will_be_sufficient} will be proved if we show 
$$
{\rm L}:= \frac{1}{h \tau}\left[(2 + \tau) \tau \sqrt{4 h^2 + 1} + 3\right]
<
 2(1 + \sqrt{3})  - (\sqrt{3} - 1) \tau =: {\rm R}.
$$
Let us select $\tau \in (s,1)$ sufficiently small so that  
\begin{equation}\label{eq:tau_small_enough}
4 + 2 \tau < 2(1+\sqrt{3}) - (\sqrt{3}-1) \tau ,
\end{equation}
one possibility is e.g. to choose $\tau = 2 - \sqrt{3}$, consistent with $\tau \in (s,1)$ in
view of the constraint imposed  on $s$.
Then we have
$$
\lim_{h \to +\infty} \textrm{L} = 4 + 2\tau < \textrm{R},
$$
and the result follows.
\end{proof}

Inequality \eqref{eq:tau_small_enough} is solved for
$0 < \tau < 2(2-\sqrt{3})$.
Values leading to inequality \eqref{eq:comparison} are e.g.
$$
\tau = 2-\sqrt{3}, \qquad h = 16 ,
$$
they lead to the values 
$$
\HHH^2(\particularsurface) \approx 22.456 + c, \qquad 
\HHH^2(\superficieconica) \approx 22.585 + c
$$
where $c$ is the common value
$$
c := \HHH^2(\superficieconica \cap \{ |x| > h\tau \}) = \HHH^2(\particularsurface \cap \{ |x| > h\tau \}),
$$

\begin{figure}
\includegraphics[width=0.9\textwidth]{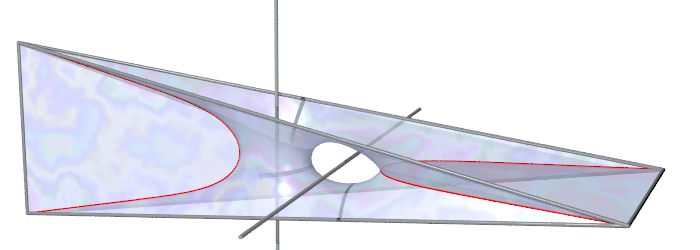}
\caption{\small{Minimal film obtained numerically starting a gradient flow from the $\particularsurface$ in \eqref{eq:particularsurface}, with
$\tau = 2 - \sqrt{3}$ and $h = 3.5$.
It satisfies property (NW) of Definition \ref{def:NW}.
Computation performed with the \texttt{surf} software code by Emanuele Paolini.}}
\label{fig:surfacefilm}
\end{figure}

\begin{figure}
\includegraphics[width=0.9\textwidth]{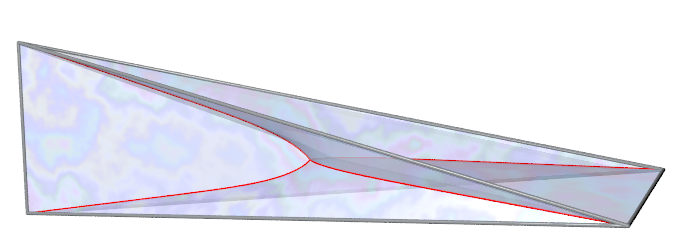}
\caption{\small{The conelike configuration $\superficieconica$ with $h=3.5$.
Note carefully that only the two crescent-like surfaces lying in $\{y = 0\}$ on the
left and in $\{z = 0\}$ on the right
are flat. 
Computation performed with the \texttt{surf} software code by Emanuele Paolini.}}
\label{fig:surfacefilmc}
\end{figure}

\begin{figure}
\begin{overpic}[width=0.75\textwidth]{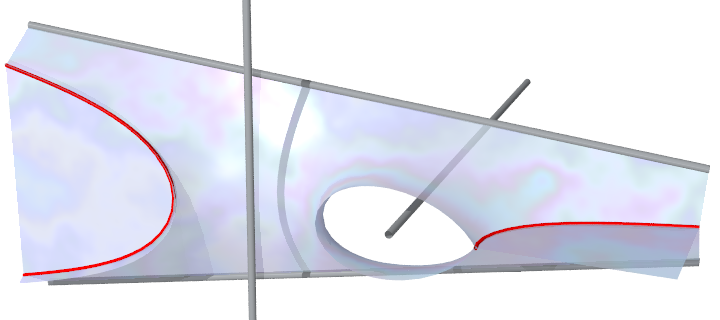}
\put (80,35) {\large [clipped]}
\end{overpic}
\caption{\small{A zoom of the computed minimal surface of Figure \ref{fig:surfacefilm} clipped at $y=-0.05$;
the $120^\circ$ condition can be clearly seen where the horizontal tunnel meets the triple line.}}
\label{fig:surfaceclipped}
\end{figure}

\begin{theorem}\label{teo:there_exists_u_that_projects}
For any choice of $s \in (0, 2-\sqrt{3})$
select the base space $\base = \basehs$ in Definition \ref{def:geometry}
with $h$ large enough (e.g. $h > 16$).
Let 
$\particularsurface$ be the competitor defined in \eqref{eq:particularsurface}.
Then there exists $u \in \domainF$ such that 
 $\projectionofjumpofu = \particularsurface$. In particular, 
$\FFF(u) = \HHH^2(\particularsurface)$, and
$$
\minFb < \HHH^2(\superficieconica).
$$
\end{theorem}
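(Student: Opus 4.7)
The strategy is to exhibit $u$ by producing an alternative cut-and-paste realization of $\covering$ in which the large component of the cutting locus coincides with $\particularsurface$, and then taking $u$ to be the indicator of the first sheet of that realization. Combined with Theorem \ref{teo:comparison} this will give $\minFb \le \FFF(u)=\HHH^2(\particularsurface)<\HHH^2(\superficieconica)$.

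By Remark \ref{rem:cut_deformation} the cover $\prj:\covering\to\base$ is determined, up to isomorphism, by a stratified cutting set equipped with gluing permutations, provided local triviality at each singular stratum holds. The first step would be to check that the augmented set $\Xi:=\particularsurface\cup D_1\cup D_2$ is such an admissible cutting set, where $D_i$ ($i=1,2$) is a small topological disc bounded by $\looops_i$ placed inside $\R^3\setminus\particularsurface$. The constraint $s<\tau$ guarantees that $\looops_1\subset\{x=-sh\}$ and $\looops_2\subset\{x=sh\}$ both lie strictly in the slab $\{|x|<h\tau\}$, where $\particularsurface$ consists only of the horizontal faces of $\Wedge$ together with the central square $\Sigma_v\cap\{x=0\}$; such discs can thus be chosen disjoint from $\particularsurface$. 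Assigning to $\particularsurface$ the $3$-cycle $(1,2,3)$ on a coherent orientation of its smooth 2-strata, and to $D_1,D_2$ the transposition $(2,3)$, all local triviality conditions of Remark \ref{rem:stratified_cutting} are verified: at each triple curve of $\particularsurface$ three pieces meet all with the same $3$-cycle and $(1,2,3)^3=()$; at a meridian of $\looops_i$ only $D_i$ is crossed, producing $(2,3)$, consistent with the nontriviality of lifts of loops around the invisible wires; at any edge of $\edges$ only $\particularsurface$ is crossed, giving the $3$-cycle and hence the wetting property \propertyone.

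Having realized $\covering$ with cutting locus $\Xi$, I would define $u\in BV(\covering;\{0,1\})$ by setting $u\equiv 1$ on sheet $1$ and $u\equiv 0$ on sheets $2$ and $3$. The fiber constraint in \eqref{eq:domain0} is satisfied pointwise, sheet $1$ extends unobstructed to infinity, so the Dirichlet condition in \eqref{eq:domain} holds and $u\in\domainF$. Along $\particularsurface$ the $3$-cycle $(1,2,3)$ sends each sheet to the next, so at every $x\in\particularsurface$ exactly two of the three preimages of $x$ are jump points of $u$ (the $1\to0$ and $0\to1$ transitions), the third being a $0\to0$ transition; along $D_1\cup D_2$ the permutation $(2,3)$ fixes sheet $1$ and glues the two zero-sheets to each other, so $u$ does not jump at all. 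Hence $\prj(J_u)=\particularsurface$ and $|Du|(\covering)=2\HHH^2(\particularsurface)$, whence by the area representation of the preceding theorem
$$
\FFF(u)=\tfrac12 |Du|(\covering)=\HHH^2(\prj(J_u))=\HHH^2(\particularsurface),
$$
and Theorem \ref{teo:comparison} yields the claimed strict inequality.

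The main delicate point I foresee is the first step: one has to exhibit a coherent orientation of the 2-strata of $\particularsurface$ and to position the auxiliary discs $D_1,D_2$ so that the combinatorial data exactly match those of the construction of Section \ref{sec:covering}, and so that the labelling of sheet $1$ coming from the Dirichlet condition is compatible with the cut-and-paste gluing. Once this geometric matching is made explicit, both the verification of local triviality (essentially the calculation in Remark \ref{rem:stratified_cutting}) and the identification of $J_u$ reduce to bookkeeping on permutations.
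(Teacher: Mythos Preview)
Your overall strategy---realize $\covering$ by cut-and-paste with $\particularsurface$ as the large cutting component and take $u\equiv 1$ on sheet~$1$---is exactly the paper's. The gap is your claim that the discs $D_1,D_2$ spanning the invisible wires can be chosen \emph{disjoint} from $\particularsurface$.

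They cannot. The whole purpose of the invisible wires (Section~\ref{sec:base}) is to thread the two tunnels of the positive-genus surface, and $\particularsurface$ is built so that this happens: in the slab $-h\tau<x<0$ the surface consists of the two lateral faces of $\Wedge$, closed off at $x=0$ by the central square and for $x\le -h\tau$ by $\Sigma_1\cup\Sigma_v$, forming a tube in the $z$-direction through which the line $\looops_1=\{x=-sh,\ y=0\}$ passes. A meridian of this tube is a closed curve lying on $\particularsurface$ with linking number $\pm1$ with $\looops_1$, so every disc with boundary $\looops_1$ meets $\particularsurface$ (and similarly for $\looops_2$). This forced intersection is precisely the ``selfintersection curve'' highlighted in Remark~\ref{rem:stratified_cutting}, across which the $3$-cycle on the large surface must be conjugated by $(2,3)$. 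If instead $\particularsurface$ carried a single global $3$-cycle as you propose, every small meridian of every piece of a long edge would give the same permutation; the relation $L_{3,2}\to c^{-1}b$ in \eqref{eq:arcs} would then force $\sigma_b=\sigma_c^{2}\ne()$, so the resulting cover would not be isomorphic to $\covering$ and your $u$ would not lie in $\domainF$.

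The paper avoids this by invoking Remark~\ref{rem:cut_deformation} directly: deform the original cutting locus of Figure~\ref{fig:covering_cut} by an ambient homeomorphism of $\R^3$ fixing $\edges\cup\iwires$ so that its large component becomes $\particularsurface$; the discs are carried along, necessarily still intersect $\particularsurface$, and the gluing permutations (including the $(2,3)$-conjugation across the intersection curves) are inherited automatically. With $u=1$ on sheet~$1$ the $(2,3)$-cut at the discs fixes sheet~$1$, whence $\prj(J_u)=\particularsurface$.
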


\begin{proof}
Fix $\tau = 2 - \sqrt{3}$ and let $\particularsurface$ be the corresponding competitor 
defined in \eqref{eq:particularsurface}.
Since $s < 2 - \sqrt{3} = \tau$, it follows that the invisible wires do not
intersect $\particularsurface$ and
in view of Remark \ref{rem:cut_deformation} we can assume that the cutting set in the cover construction
is defined by $\particularsurface$ and then simply define $u : \covering \to \{0,1\}$
as $1$ on the first sheet and zero otherwise.
Clearly $u$ satisfies the required constraints to ensure $u \in \domainF$ and
in view of the gluing permutations (in particular $u$ is locally constant, $u = 1$ on sheet $1$ and $u = 0$ on sheets $2$ and $3$,
on a neighborhood of $\prj^{-1}(\iwires)$, compare Figure \ref{fig:covering_cut})
it is easy to show that $\projectionofjumpofu = \particularsurface$,
\end{proof}

Function $u$ can also be constructed directly using the abstract definition of the covering
(Section \ref{sec:abstract}) as follows.
First we need to fix an orientation and decide a permutation $\sigma \in S_3$ for each smooth portion of
$\particularsurface$.
This is done consistently to the permutations of Figure \ref{fig:covering_cut}, for example
we associate the permutation $(1,3,2)$ to the left flat ``lunette'' when traversing it from front to back.
We also need two ``phantom'' disks mimicking the cutting disks of figure \ref{fig:covering_cut} (with associated
permutation $(2,3)$) that cut e.g. the frontal trapezium with a vertical line
in a right part with permutation $(1,3,2)$ and a left part with permutation $(1,2,3)$ (when traversing it
from front to back).
The central vertical square would have the permutation $(1,3,2)$ associated to it when traversing from right to
left.
In a similar fashion we attach a suitable permutation to all the remaining (oriented) portions of the surface $\particularsurface$, taking
into account that the top trapezium is also divided in two parts by the phantom disk.

Now we first define a function $\hat u$ on the set $\ucovering$
of paths in $\base$ starting at the base point $\basepoint$.
If $\gamma$ is such a path with $x := \gamma(1) \not\in \particularsurface$, we can suppose, up to a small deformation
in the same homotopy class,
that it has only trasversal intersections with $\particularsurface$ and no intersections with the
triple curves nor with the intersection of the phantom disks with $\particularsurface$.
Then we can enumerate the permutations associated to the intersections of $\gamma([0,1])$ 
with $\particularsurface$
and the phantom disks or their inverse (based on whether $\gamma$ traverses the surface in a positive or negative
direction with respect to its selected orientation) and multiply all these permutations to obtain $\sigma_\gamma \in S_3$.
If the final permutation fixes $1$, i.e. $\sigma_\gamma(1) = 1$, then we define $\hat u(\gamma) = 1$, otherwise $\hat u(\gamma) = 0$.

The desired function $u : \covering \to \{0,1\}$ is now defined as $u([\gamma]) = \hat u(\gamma)$ where $[\gamma]$ is
the equivalence class of  $\gamma$ in \eqref{eq:abstractequiv}.
It is necessary to show that this is a good definition, in other words, that
$\hat u (\gamma_1) = \hat u(\gamma_2)$ whenever $\gamma_1 \sim \gamma_2$,
i.e. whenever $[\gamma_1 \gamma_2^{-1}] \in H$.
It is readily seen that this is a consequence of the stronger requirement that $\hat u(\gamma) = 1$ for all
$\gamma$ closed curve 
with $[\gamma] \in H$, which we now prove.

The choice of permutations on the pieces of surface is chosen such that the final permutation computed on a closed
$\gamma$ is insensitive to homotopic deformations of $\gamma$, so that we only need to show that $\sigma_\gamma$
fixes $1$ whenever $[\gamma] \in H$.
This is true precisely because the choice of the permutations mimics the permutations used to define the covering
by cut and paste displayed in Figure \ref{fig:covering_cut}.

\begin{remark}
Inequality \eqref{eq:comparison} is crucial in trying to actually prove the existence
of a non-simply connected minimal film spanning an elongated tetrahedral frame,
since it shows the existence of a surface
with the desired topology having area strictly less than the minimal area achievable with
conelike configurations.
The candidate would be a minimizer of $\FFF$, since Theorem \ref{teo:there_exists_u_that_projects} implies that
$\minFb < \HHH^2(\superficieconica)$.
However we still are unable to conclude, 
 because we cannot exclude that the minimizing surface interferes
with the invisible wires, i.e. it does not satisfy property (NW) of Definition \ref{def:NW}
(see Section \ref{sec:whatcangowrong}).
Numerical simulations however strongly suggest that with appropriate choice of $h$ and $s$ in $\basehs$
this is not the case (Figure \ref{fig:surfacefilm}).
\end{remark}

\subsection{Comparison with the Reifenberg approach}\label{sec:reifenberg}
The approach of E. R. Reifenberg \cite{Re:60} to the Plateau problem is based on \v Cech
homology. We want to show here that, presumably, the Reifenberg approach
(in three space dimensions 
and in codimension one) cannot
reproduce a surface with the topology as the one depicted in Figure \ref{fig:morgan}, right.

One first fixes a compact\footnote{See \cite{Fa:16} for an extension of the theory for a noncompact
$G$.}
 abelian group $G$ (for our purposes it is convenient to think of $G$
as if $G = \Z$ even if this is not compact; in what follows the choice $G = \Z_m$ with
various values for $m$ leads to the same considerations).
In the sequel all the homology groups are isomorphic to the direct sum $\bigoplus_{i=1}^r G$
of $r$ copies of $G$, we shall refer to $r$ as the \emph{rank} of the homology group.

Next,
given a compact subset
$\boundaryReifenberg$ of $\R^3$, one has to minimize the Hausdorff
measure $\HHH^2(\surfaceReifenberg)$ of $\surfaceReifenberg$,
 among all compact sets $\surfaceReifenberg \supseteq \boundaryReifenberg$ in
$\R^3$ satisfying a suitable condition, that we will specify.
Here we fix $\boundaryReifenberg$ to be the union of the six edges of a tetrahedron.

The homology group $H_1(\boundaryReifenberg;G)$ is seen to have rank $3$, by
observing that $\boundaryReifenberg$ is homotopic to a bouquet of three loops, and a convenient choice
of the generators is:

\begin{itemize}
\item[$\alpha$:] (counterclockwise) loop around the front face, described as $\shortone^{-1} \longfour^{-1} \longtwo$
with reference to Figure \ref{fig:wedge};
\item[$\beta$:] loop around the top face, $\longtwo^{-1} \shorttwo \longone$;
\item[$\ell:$] loop along the long edges, $\longfour^{-1} \longtwo \longone^{-1} \longthree$.
\end{itemize}

For some $\surfaceReifenberg \supseteq \boundaryReifenberg$
the inclusion $i : \boundaryReifenberg \to \surfaceReifenberg$
induces a homomorphism $i_* : H_1(\boundaryReifenberg;G) \to H_1(\surfaceReifenberg;G)$ between the first homology groups of $\boundaryReifenberg$ and $\surfaceReifenberg$ respectively,
whose kernel is called algebraic boundary of $\surfaceReifenberg$.

At this point for a given subgroup $L < H_1(\boundaryReifenberg;G)$ we search for a minimizer of
$\HHH^2(\surfaceReifenberg)$ among all $\surfaceReifenberg \in \SSS(L)$ where
$\SSS(L)$ is the family of compact sets whose algebraic boundary contains $L$.

If $\psurfaceReifenberg$ is a surface with the required topology (e.g. the one of Figure \ref{fig:surfacefilm}
or the one of Figure \ref{fig:surface2}, or even that of Figure \ref{fig:morgan} right),
we want on one hand $\psurfaceReifenberg \in \SSS(L)$ and on the other hand
$\SSS(L)$ to be as small as possible,
which leads to the choice $L = \text{ker}(i_*)$.

The set $\psurfaceReifenberg$ has first homology group
$H_1(\psurfaceReifenberg,G)$ of rank two, generated by $i_*(\alpha)$ and $i_*(\beta)$, whereas
$\ell$ is a generator of the kernel of $i_*$, leading to the choice of $L$ as the subgroup of
$H_1(\boundaryReifenberg,G)$ generated by $\ell$.

The family $\mathcal{S}(L)$ then contains subsets
of $\R^3$ with first homology group of rank $2$, containing $\boundaryReifenberg$ and with algebraic boundary $L$.

Unfortunately the imposed condition on the algebraic boundary does not impose wetting of the two short
edges $\shortone$ and $\shorttwo$, and indeed the surface of Figure \ref{fig:surfacefilmskew} also
is in $\mathcal{S}(L)$ and we presume it to be the Reifenberg minimizer.


\section{Positioning the invisible wires}\label{sec:whatcangowrong}
%
%
For a fixed (sufficiently large) choice  of $h$ the minimum value $\minFb=\FFF(\umin) $
will depend on the relative position of the invisible wires $\iwires$ with respect to the
tetrahedral frame.
Our first guess would be that for a wide range of positions (those for which $\iwires$ does not touch $\superficieminima=\prj(J_{\umin}) 
$) such value is constant, and so is a minimizer of the functional.

When $\iwires$ leaves such a set of positions we would expect the minimum value to increase a bit, since
in that case the invisible wires impose a further constraint on $\superficieminima$.
Indeed the wires would ``push'' on the film surface and act as an obstacle for as long as the deformed
surface bends at the wire with an angle larger than $120$ degrees.
This behaviour minics the situation of a Steiner tree for three points vertices of an obtuse triangle with
an angle larger than $120^\circ$.

Beyond the $120^\circ$ threshold we expect one of the local ``J. Taylor'' rules \cite{Ta:76}  for a minimizing film to
take effect and observe the formation of a new (fin-like) portion of the surface connecting a portion
of $\iwires$, let us call it the ``wetted portion'', to a triple curve on the deformed surface meeting
at angles of $120^\circ$.

\begin{figure}
\includegraphics[height=0.5\textwidth]{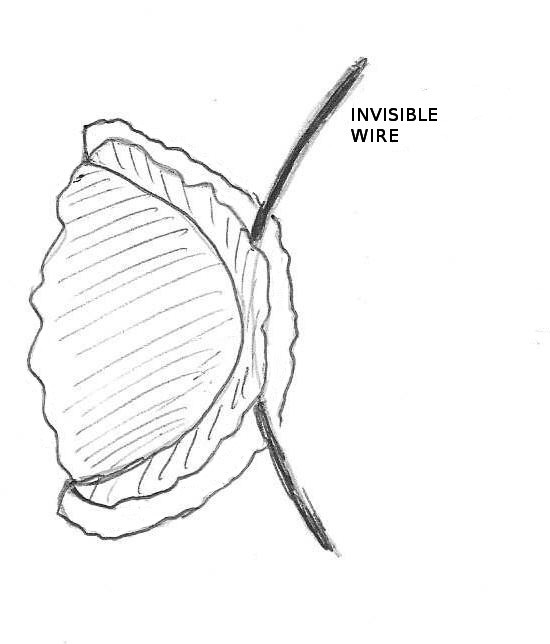}
\includegraphics[height=0.5\textwidth]{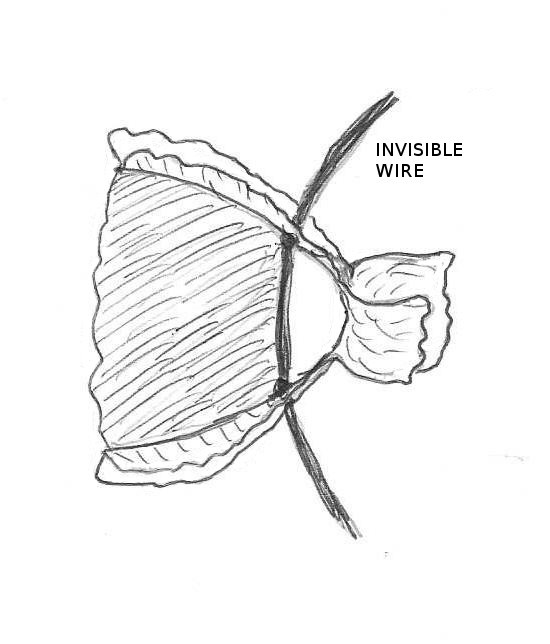}
\caption{\small{When the invisible wire approaches the triple curve (left) it becomes energetically
convenient for the surface to jump into a configuration (right) where the invisible wire gets partially
wetted (thus no longer invisible!) and a new hole is created right of the wire.}}
\label{fig:wetting}
\end{figure}

\begin{figure}
\includegraphics[width=0.9\textwidth]{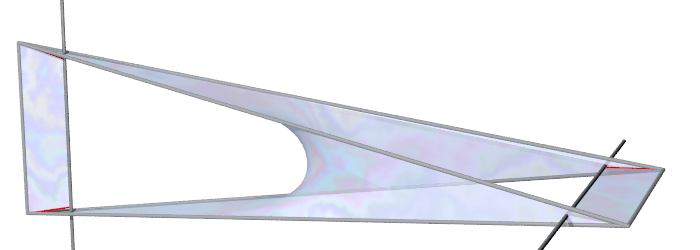}
\caption{\small{If the invisible wires approach the two short edges, the  minimizer takes the
structure shown in the picture ($h = 3.5$).  The invisible wires are partially wetted by the film.
Four short triple curves join the vertices of the frame with the boundaries of the wetted portion of
the invisible wires.
The structure of the film surface is different from that of Figure \ref{fig:surfacefilm}, in
particular it does not satisfy property (NW) of Definition \ref{def:NW}.
}}
\label{fig:surfacefilmwrong}
\end{figure}

The story is however completely different if $\iwires$ is moved to meet one or both of the two
triple curves of the minimizing surface (red curves in Figure \ref{fig:surfacefilm}). 

In this situation it is energetically favorable for the surface to suddenly jump into a configuration
where a large portion of $\iwires$ is wetted by the flat part of the minimizer (Figure \ref{fig:wetting}).
Two new holes 
in the surface would then be created.

Actually this would be even more dramatic, since the formation of two smooth catenoid tunnels would
come out in a situation where the tunnels are too long to be stable, so we also expect the tunnels
to disappear completely with a final configuration resembling the one that would be obtained by a
film that does not wet $\shortone$ and $\shorttwo$ with the addition of two flat trapezoid portions connecting e.g.
$\shortone$ with part of $\loopone$ and with the rest of the surface (similarly on the right),
see Figure \ref{fig:surfacefilmwrong}.

In order to rule out this possible minimizer we can derive a lower bound for the surface area in such a
configuration.

\begin{definition}\label{def:Steiner_like_surface}
For a given choice of $h>0$ and $0 < s < 1$ and selecting $\base = \basehs$ we say that
a film surface 
$\genericsurfaceSteiner$ is ``Steiner-like'' if it satisfies properties
\propertyone, \propertytwo\ of Theorem \ref{teo:properties_of_sigma}\footnote{When $\projectionofjumpofu$ is replaced by $\genericsurfaceSteiner$.} and moreover the intersection
$\genericsurfaceSteiner \cap \pi_t$ with $\pi_t = \{x = ht\}$ and $|t| > s$ separates the four sides of the
rectangle $\tetrahedron \cap \pi_t$.
\end{definition}

\begin{theorem}\label{teo:modification_of_a_Steiner_like_surface}
Let $s \in (0, 2 - \sqrt{3})$. 
A Steiner-like surface $\superficiesteiner$ can be modified into a surface $\particularsurfaceSteiner$ with
the topology of
the one constructed in Section \ref{sec:sigma2} that does not wet the invisible wires and has lower
area provided we choose $s \leq s_0$, $s_0$ small enough and then $h$ large enough.
Consequently $\superficiesteiner$ cannot be a minimizer of functional $\FFF$.
\end{theorem}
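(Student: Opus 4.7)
My plan is to leave $\superficiesteiner$ untouched in the outer region $\{|x|>h\tau\}$ and replace it in the inner slab $\{|x|\leq h\tau\}$ by the ``facets + vertical connector'' construction of Section~\ref{sec:sigma2}, where $\tau\in(s,2-\sqrt{3})$ is to be fixed (concretely $\tau=2-\sqrt{3}$, after forcing $s\leq s_{0}<2-\sqrt{3}$). Explicitly, I would set
\[
\particularsurfaceSteiner := \bigl(\superficiesteiner\cap\{|x|>h\tau\}\bigr)\cup\Sigma_{2}'\cup\Sigma_{3}'\cup\Sigma_{v}',
\]
where $\Sigma_{2}'$ is the union of the front/back facets of $\Wedge$ in $-h\tau<x<0$, $\Sigma_{3}'$ the union of the top/bottom facets in $0<x<h\tau$, and $\Sigma_{v}'$ a vertical connector lying in the three planes $\{x=0\}$, $\{x=\pm h\tau\}$, chosen so as to reproduce at the two interfaces the same combinatorial gluing used in Section~\ref{sec:covering} to define $\particularsurface$.

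The area comparison mimics the proof of Theorem~\ref{teo:comparison}. The new inner part will be bounded above by $(2+\tau)\tau\sqrt{4h^{2}+1}+3$, where the constant $3$ comes from $\HHH^{2}(\Sigma_{v}')\leq 3$ since each of its three components sits in a rectangular cross-section of $\Wedge$ of area at most $1$. To bound from below the removed part $\superficiesteiner\cap\{|x|\leq h\tau\}$, I would exploit that $\superficiesteiner$ is Steiner-like: for $|t|>s$ the slice $\superficiesteiner\cap\pi_{t}$ separates the four sides of $\Rectangle_{t}$ and therefore has $\HHH^{1}$-measure at least the Steiner tree length $1+\sqrt{3}-(\sqrt{3}-1)|t|$, whereas for $|t|\leq s$ property~\propertyone\ alone gives $\HHH^{1}(\superficiesteiner\cap\pi_{t})\geq 2(1-|t|)$, as in Theorem~\ref{teo:lowerbound}. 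Applying the coarea formula on $\{|x|\leq h\tau\}$ and taking $\tau=2-\sqrt{3}$, $s=0$, the leading $h$-coefficient of the lower bound evaluates to $17-9\sqrt{3}$, strictly greater than the leading $h$-coefficient $2(11-6\sqrt{3})=22-12\sqrt{3}$ of the upper bound (the gap being $3\sqrt{3}-5>0$). By continuity in $s$ this strict inequality persists for every $s\in[0,s_{0}]$ with $s_{0}>0$ small enough, and the additive constant $3$ is then absorbed by choosing $h$ sufficiently large depending on $s_{0}$. This produces $\HHH^{2}(\particularsurfaceSteiner)<\HHH^{2}(\superficiesteiner)$.

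It remains to certify $\particularsurfaceSteiner$ as a valid competitor. The invisible wires lie on the planes $\{x=\pm sh\}\subset\{|x|<h\tau\}$, which are disjoint from the three planes supporting $\Sigma_{v}'$; moreover the wires sit in the interior of the corresponding rectangular sections and so avoid the boundary facets carrying $\Sigma_{2}'\cup\Sigma_{3}'$ as well, hence $\particularsurfaceSteiner\cap\iwires=\varnothing$ and (NW) holds. By Remark~\ref{rem:cut_deformation} we may take $\particularsurfaceSteiner$ itself as the cut surface in the covering construction, and the argument of Theorem~\ref{teo:there_exists_u_that_projects} then produces $u\in\domainFb$ with $\projectionofjumpofu=\particularsurfaceSteiner$ and $\FFF(u)=\HHH^{2}(\particularsurfaceSteiner)<\HHH^{2}(\superficiesteiner)$; therefore $\superficiesteiner$ cannot arise as the jump projection of any minimizer of $\FFF$. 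The step I expect to require real care is precisely this last one: the Steiner-like slice of $\superficiesteiner$ at $\{x=\pm h\tau\}$ is an a priori arbitrary $4$-separator of $\Rectangle_{\pm\tau}$, and one has to select permutations on each smooth stratum of the new cut surface so that the gluing is consistent and the resulting cover is isomorphic, up to the outer ``arbitrary'' piece inherited from $\superficiesteiner$, to the one of Figure~\ref{fig:covering_cut}.
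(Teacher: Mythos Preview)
Your proof follows the same route as the paper's: replace $\superficiesteiner$ in $\{|x|\leq h\tau\}$ (with $\tau=2-\sqrt{3}$) by the facet construction of Section~\ref{sec:sigma2}, bound the new inner part above by $(2+\tau)\tau\sqrt{4h^{2}+1}+3$ and the removed part below via coarea using the Steiner-tree estimate for $|t|>s$ together with \eqref{eq:bound1_using_sides} for $|t|\leq s$, then observe that the $s$-dependent deficit vanishes as $s\to 0^{+}$ and absorb the additive constant by taking $h$ large. The paper's proof is terser and does not spell out the competitor/gluing issue you raise at the end, so your caution there is well placed rather than a defect of your argument relative to the paper.
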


\begin{proof}
The proof mimics that of Theorem \ref{teo:comparison}. We modify $\superficiesteiner$ in the region
$-h\tau < x < h\tau$ with the choice $\tau = 2 - \sqrt{3}$ exactly as we did for Theorem \ref{teo:comparison}
obtaining a surface $\particularsurfaceSteiner$.
Using again the coarea formula,
the sectional estimate \eqref{eq:bound0_using_Steiner}
with the second case also if $|t| < 2-\sqrt{3}$
and \eqref{eq:bound1_using_sides}, we have
$$
\begin{aligned}
\HHH^2(\superficiesteiner & \cap \{-h\tau < x < h\tau\})
\\
\geq &
 \int_{-h\tau}^{h\tau} \HHH^1(\genericsurface \cap \pi_t) ~dt
\\
 \geq &
  2h \int_0^s ( 2 - 2t ) ~dt
  + 2h \int_s^\tau \left[ 1 + \sqrt{3} - (\sqrt{3} - 1) t \right] ~dt
\\
 = &
  h[(2+2\sqrt{3})\tau - (\sqrt{3}-1)\tau^2 - (2\sqrt{3}-2)s - (3-\sqrt{3})s^2],
\end{aligned}
$$
that has to be compared with
$$
\HHH^2(\particularsurfaceSteiner \cap \{-h\tau \leq x \leq h\tau\}) =
 (2 + \tau) \tau \sqrt{4 h^2 + 1} + 3.
$$
The only difference with the derivation of Theorem \ref{teo:comparison} is the presence of the
two terms containing the parameter $s$.
They however vanish as $s \to 0^+$, so that by selecting $s>0$ sufficiently small we can again conclude if
$h$ is sufficiently large.
\end{proof}

%

Specific values for $s_0$ and $h$ turn out to be
$$
s_0 = \frac{2 - \sqrt{3}}{4}, \qquad h \geq 40
$$
or
$$
s_0 = \frac{2 - \sqrt{3}}{100}, \qquad h \geq 16
$$

The result of the previous theorem suggests the following

\begin{conjecture}\label{conj:NW}
Let $h > 16$ and $s = \frac{2 - \sqrt{3}}{100}$. Then $\basehs$ satisfies condition (NW) of
Definition \ref{def:NW}.
\end{conjecture}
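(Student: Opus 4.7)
The plan is to argue by contradiction, assuming there exists a minimizer $\umin \in \domainFb$ whose film surface $\superficieminima := \prj(J_{\umin})$ wets a non-trivial portion of $\iwires$, and deriving a contradiction with Theorem \ref{teo:modification_of_a_Steiner_like_surface}. The strategy is to show that any such $\superficieminima$ is automatically Steiner-like in the sense of Definition \ref{def:Steiner_like_surface}; once this is done, Theorem \ref{teo:modification_of_a_Steiner_like_surface} with $s_0 = \tfrac{2-\sqrt{3}}{100}$ and $h \geq 16$ immediately excludes it from $\superficiminime(\basehs)$.

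First I would use the regularity of minimizers provided by Remark \ref{rem:taylor}, suitably extended to the partially-wetting setting along the lines of \cite{Al:76,Pa:92}, to pin down the local geometry of the wetted locus. The intersection $\superficieminima \cap \iwires$ would be a finite union of closed arcs; along each wetted arc the film attaches to the wire, and at each endpoint of the arc a triple-junction curve emanates into the interior with angles prescribed by the $120^\circ$ Taylor condition together with the balance of the wire tension. The qualitative picture is exactly the one of Figure \ref{fig:surfacefilmwrong}.

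The central step is the slicing analysis. For every $t$ with $|t| > s$ the plane $\pi_t = \{x = h t\}$ lies at positive distance from $\iwires$, since by Definition \ref{def:geometry} the wires are concentrated in $\{x = \pm s h\}$, so properties \propertyone\ and \propertytwo\ of Theorem \ref{teo:properties_of_sigma} are still available. Given a continuous path $\gamma \subset \Rectangle_t$ joining two sides of the rectangle $\tetrahedron \cap \pi_t$, one would close $\gamma$ outside $\Rectangle_t$ either by a loop around the incident long edge (adjacent-sides case) or by a loop around the near short edge at $L^\infty$-distance less than $\wiredistM$ (opposite long-sides case); \propertyone\ or \propertytwo\ then forces the closed loop to meet $\superficieminima$. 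The delicate point is to localize the intersection on $\gamma$ rather than on the closing arc: this should follow from an intersection-number / linking-number argument, exploiting the fact that the closing arc can be chosen in a prescribed thin tubular neighbourhood of $\edges \cup \iwires$ disjoint from $\superficieminima$ away from the wetted arcs.

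Once Steiner-likeness of $\superficieminima$ is obtained, Theorem \ref{teo:modification_of_a_Steiner_like_surface} produces a non-wetting competitor with the topology \eqref{eq:particularsurface} and strictly smaller area, contradicting minimality. The hard part will be precisely the slicing step: properties \propertyone--\propertytwo\ are naturally formulated as three-dimensional conditions, and promoting them to a two-dimensional separation statement on $\Rectangle_t$ is not automatic when the film wets $\iwires$, because the wetted arcs join the singular locus of the film and may be intercepted by the closing arc of $\gamma$, spoiling the required localization of intersections. If this topological obstruction proves resistant, a fallback would be a direct quantitative estimate: bound from below the additional $\HHH^2$-cost of the fin-like structure along the wetted arcs (using local Taylor minimality near $\iwires$) by a quantity of order one, and bound from above the possible area savings by $O(s)$; the choice $s = \tfrac{2-\sqrt{3}}{100}$ would then make the trade-off unfavorable and still yield the contradiction with Theorem \ref{teo:there_exists_u_that_projects}.
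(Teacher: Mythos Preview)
The statement you are attempting to prove is labelled a \emph{Conjecture} in the paper and is not proved there; the authors explicitly present it as an open problem motivated by Theorem~\ref{teo:modification_of_a_Steiner_like_surface} and by numerical evidence. So there is no proof in the paper to compare against, and the relevant question is whether your sketch closes the gap. It does not, for a reason more basic than the localization issue you flag.

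Your central step is to deduce that a wire-wetting minimizer $\superficieminima$ is Steiner-like (Definition~\ref{def:Steiner_like_surface}) from properties \propertyone\ and \propertytwo. But \propertyone\ and \propertytwo\ are \emph{too weak} to give four-way separation of the sides of $\Rectangle_t$. The paper's own slicing analysis in \S5.1 makes this explicit: for $|t|>s$, \propertyone\ controls paths between adjacent sides (loops around a long edge), and \propertytwo\ controls paths joining the two \emph{long} sides (closed around the nearby short edge at $L^\infty$-distance $<\wiredistM$). Paths joining the two \emph{short} sides would have to be closed around the \emph{far} short edge, at distance $h(1+|t|)>\wiredistM$, where \propertytwo\ says nothing. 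This is precisely why the sectional lower bound \eqref{eq:bound0_using_Steiner} is a \emph{minimum} between the Steiner length and $2(1+|t|)$, and why the accompanying footnote describes only a three-region partition with both short sides allowed in the same region. A film with sections realizing the $2(1+|t|)$ alternative on a set of positive measure in $t$ satisfies \propertyone--\propertytwo\ yet is not Steiner-like, so Theorem~\ref{teo:modification_of_a_Steiner_like_surface} does not apply to it. To make your route work you would need an extra ingredient, extracted from the wetting hypothesis itself, forcing the fourth separation; nothing in the paper supplies this.

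Your fallback---bounding the $\HHH^2$-cost of the fins by an $O(1)$ quantity and the savings by $O(s)$---is the natural heuristic, but as stated it is essentially a rephrasing of the conjecture. The difficulty is that ``wetting $\iwires$'' does not a priori force any fixed amount of extra area: a minimizer could in principle touch $\iwires$ along an arbitrarily short arc, with correspondingly small fins, so a uniform $O(1)$ lower bound on the fin cost requires a genuine rigidity argument that is not available from Remark~\ref{rem:taylor} alone.
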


\section{All possible triple covers}\label{sec:allcoverings}
We want to describe all possible covers of the base space $\base$ of degree $3$
among those that will produce soap films that touch all six edges of the wedge and
are not forced to touch the invisible circular wires.
Here the fundamental group comes obviously into play, since 
we do that by describing all possible monodromy actions on the fiber above the
base point $\basepoint$.
This monodromy action can equivalently be described as the action defined by a
subgroup of $\pi_1(B)$ of index $3$ by right multiplication.

\begin{remark}
The presence of triple points in the wireframe of the tetrahedron,
together with the wetting condition (implying the existence of triple lines in the minimizing
film) requires the presence of at least two distinct and nontrivial monodromy actions on the
fiber at infinity, hence the cover has at least degree $3$.
\end{remark}

\begin{remark}
A degree $3$ cover cannot allow for the standard (i.e., conelike) minimizing film for the tetrahedron.
This is due to the presence of the central quadruple point of the minimizing film.
Indeed we shall see that all covers satisfying the constraints will require a nontrivial
monodromy action on the fiber when circling the invisible wires, making them an essential
feature in the construction of the base space. 
Of course we could allow for more than two invisible wires.
\end{remark}

Let $H$ be a subgroup of $\pione$ of index $3$, and denote by 
$s_1 := H$, $s_2 := H w_2$ and $s_3 := H w_3$ 
its right cosets,
for some choice of representative
elements $w_2, w_3 \in \pione$.
The elements of $\pione$ define an action on $\{s_1, s_2, s_3\}$ defined by
$g : h \to hg$ (right multiplication by $g \in \pi_1(B)$).
If $h'$, $h''$ are in the same right coset then $h' g (h'' g)^{-1} = h' (h'')^{-1}$
and the definition is wellposed.

We then have a map $\pione \to S_3$ that to any element of $\pione$ associates
an element of the permutation group of the three cosets.
A permutation of $S_3$ is interpreted as a permutation of the indices in
$\{s_1, s_2, s_3\}$.

By composition, this map is defined once we know its value on the generators of
$\pione$.

In conclusion we have permutations $\sigma_a$,
$\sigma_b$,
$\sigma_c$,
$\sigma_d$,
$\sigma_e \in S_3$
(permutations of $\{1,2,3\}$) associated to the generators
$a, b, c, d, e$ respectively.

We now impose a number of constraints.

\begin{description}
\item[Consistency with relators]
In the group presentation \eqref{eq:presentation} the two relators must be consistent
with the choice of the permutations $\sigma_a, \dots, \sigma_e$.
In other words $\sigma_a$ must commute with $\sigma_b$ and
$\sigma_d$ must commute with $\sigma_e$.
Take for example $\sigma_a$ and $\sigma_b$; they commute if and only if one of the following
mutually exclusive conditions holds:
\begin{enumerate}
\item
$\sigma_a$ or $\sigma_b$ is the identity permutation $()$;
\item
$\sigma_a$ and $\sigma_b$ are both cyclic of order 3, hence a power
of $(1,2,3)$;
\item
$\sigma_a = \sigma_b$ are the same transposition.
\end{enumerate}
\item[Invisible-wire conditions]
The soap film  that we wish to model must not wet the two circular loops associated
to generators $a$ and $e$ in Figure \ref{fig:covering_base}.
In particular, a closed path starting at the base point in $\base$ and looping around one
of such loops must not necessarily traverse the surface.
Consequently the generators $a$ and $b$ must not move sheet $1$ of the covering.
The corresponding condition reads then as
\begin{equation}\label{eq:invcond}
\sigma_a, \sigma_e \in \{ (), (2,3) \} ;
\end{equation}
\item[Wetting conditions]
We want to reconstruct a film  that spans all six sides of the wedge.  In other
words, any tight loop around these edges should cross the surface.
This condition is somewhat tricky to impose, particularly in situations where the cover
is not normal, because we need to state it on elements of $\pione$, which
requires to connect the base point to the tight loop.
We end up with a condition that depends on \emph{how} we choose the connecting path.
Changing the path amounts to performing a conjugation on the element of $\pione$.
A strong wetting condition could be that any element in $\pione$ that loops once around
the selected edge must move all sheets (the permutation is required to be a derangement).
This condition is insensitive to conjugation.
We shall however require a
weaker version of the wetting condition by requiring that the element of $\pione$ moves sheet $1$ (the sheet
where $u = 1$ at the base point $\basepoint$, far from $\tetrahedron$).
This condition however depends on how we connect the tight loop to the base point.
It seems only natural to require the connecting path to lie outside the wedge, which is
not the same as requiring the corresponding Wirtinger-type loop in the diagram of Figure
\ref{fig:covering_base} to move sheet $1$.
In particular this is not true for $\longthree$, the long edge that in Figure \ref{fig:covering_base}
runs in the back and does not cross the two disks, for which a linking path that does not enter
the wedge is $bc^{-1}$.
In the end the (weak) wetting conditions read as:
\begin{equation}\label{eq:wetcond}
\begin{aligned}
&\sigma_c, \quad \sigma_c^{-1} \sigma_d, \quad \sigma_b \sigma_c^{-1}, \quad
 \sigma_b \sigma_c^{-1} \sigma_d,
\\
&\sigma_a \sigma_d^{-1} \sigma_c \sigma_a^{-1} \sigma_c^{-1}, \quad
 \sigma_b \sigma_c^{-1} \sigma_e \sigma_c \sigma_e^{-1} \not\in \{ (), (2,3) \}
\end{aligned}
\end{equation}
The third relation comes from $\sigma(\longs_{4,2})^{-1} \sigma(\longs_{3,3}) \sigma(\longs_{4,2})$,
where $\sigma(\longs_{i,j})$ is the permutation associated to the varius long edges in \eqref{eq:arcs},
after substitution and simplification.
It corresponds to a path that, as mentioned above, starts at $\basepoint$, runs in the back of $\longfour$ from
below, than around $\longthree$, than again in the back of $\longfour$ and back to $\basepoint$.
The fourth relation is the inverse of $\sigma(\longs_{4,2})$.
\end{description}

We shall now search for all possible choices of 
$\sigma_a, \sigma_b, \sigma_c, \sigma_d, \sigma_e$ that are compatible with
the three set of constraints \eqref{eq:invcond}, \eqref{eq:wetcond} and
consistency with the relators of the presentation.

\begin{remark}
Our search also includes the special cases were one or both of the invisible wires $\loopone$ and $\looptwo$
are not present, since the choice, say, $\sigma_a = ()$ 
leads to the same result
as removal of $\loopone$.
\end{remark}

We separately analize the possibilities with all choices of $\sigma_a$ and $\sigma_e$ allowed
by \eqref{eq:invcond} arriving to the conclusion that the presence of both invisible wires is essential.

\subsection{Searching covers for \texorpdfstring{$\sigma_a = \sigma_e = ()$}{a=e=()}}
First note that all constraints above are insensitive to exchange of sheets $2$ and $3$.
This means that for definiteness we can assume that $\sigma_c(1) = 2$, which in view of
the first wetting constraint in \eqref{eq:wetcond} leaves us with only two possibilities:
$\sigma_c = (1,2)$ or $\sigma_c = (1,2,3)$.

\subsubsection{Case \texorpdfstring{$\sigma_c = (1,2,3)$}{c=(123)}}
Wetting constraints $2, 3, 5, 6$ imply $\sigma_d \in \{(1,3,2),(1,2)\}$ resulting
in $\sigma_c^{-1} \sigma_d$ sending $3 \mapsto 1$.
Moreover $\sigma_b \in \{(1,3,2),(1,3)\}$ resulting in $\sigma_b$ sending
$1 \mapsto 3$.
This would imply $\sigma_b \sigma_c^{-1} \sigma_d$ sending $1 \mapsto 1$, contrary to
wetting constraint $4$.

\subsubsection{Case \texorpdfstring{$\sigma_c = (1,2)$}{c=(12)}}
Wetting constraints $2, 3, 5, 6$ imply $\sigma_d \in \{(1,2,3),(1,3)\}$ resulting
in $\sigma_c^{-1} \sigma_d$ sending $3 \mapsto 1$.
Moreover $\sigma_b \in \{(1,3,2),(1,3)\}$ resulting in $\sigma_b$ sending
$1 \mapsto 3$.
This would imply $\sigma_b \sigma_c^{-1} \sigma_d$ sending $1 \mapsto 1$, contrary to
wetting constraint $4$.

\subsection{Searching covers for \texorpdfstring{$\sigma_a = ()$ and $\sigma_e = (2,3)$}{e=(23)}}
Again we can assume that $\sigma_c(1) = 2$.

Reasoning as before, from $\sigma_a = ()$ we get $\sigma_d \in \{(1,3,2),(1,2,3),(1,2),(1,3)\}$.
However $\sigma_d$ and $\sigma_e$ must commute, which is incompatible with $\sigma_e = (2,3)$.

\subsection{Searching covers for \texorpdfstring{$\sigma_a = (2,3)$ and $\sigma_e = ()$}{a=(23)}}
Again we can assume that $\sigma_c(1) = 2$.

Reasoning as before from $\sigma_e = ()$ we get $\sigma_b \in \{(1,3,2),(1,2,3),(1,2),(1,3)\}$,
which does not commute with $\sigma_a$.

\subsection{Searching covers for \texorpdfstring{$\sigma_a = \sigma_e = (2,3)$}{(a=e=(23)}}
Consistency with the relators of the group presentation leads to
$$
\sigma_b, \sigma_d \in \{ (), (2,3) \} , \qquad
\sigma_c \not\in \{ (), (2,3) \} .
$$
A direct check shows that any choice satisfying the requirements 
above also satisfies all
constraints for our covering.

\begin{figure}
\includegraphics[width=0.7\textwidth]{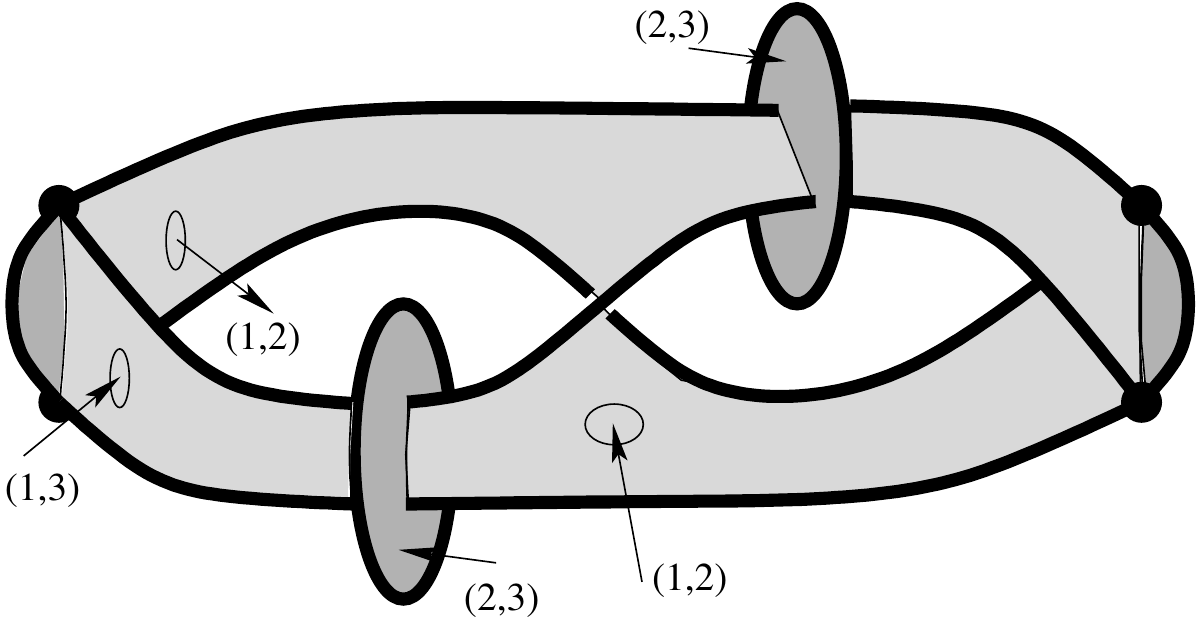}
\caption{\small{An alternative covering definition that should lead to the same
minimizing film for the covering of Figure \ref{fig:covering_cut}.}}
\label{fig:covering2_cut}
\end{figure}

Figure \ref{fig:covering2_cut} shows the cover that corresponds to the choice
$$
\sigma_b = \sigma_d = (), \qquad \sigma_c = (1,2).
$$
The resulting cover is clearly not isomorphic to the one constructed in Section \ref{sec:covering}
and it is a natural question whether the minimization problem $\FFF(u) \to \text{min}$ in this
context leads to the same result.
This is entirely possible but we do not want to pursue the subject here.

On the other hand we actually can find functions $u \in \domainF$ (with $\domainF$ redefined based on
the new covering) with a jump set that is incompatible with the former definition of $\domainF$.
We can construct such a function by making it jump across a big sphere that encloses $\tetrahedron$
with the sheet where its value is $1$ that changes from $1$ to $3$.
Then it is possible to take advantage of the fact that the wetting conditions are weak and with
$u = 1$ on sheet $3$ they do not impose wetting of e.g. $\longs_{1,1}$ resulting in the possibility
(actually achievable) of an only partially wetted $\longone$.
Similarly for the other long edges.
Clearly, however, such a surface cannot be a minimizer.

\section{Numerical simulations and conclusions}\label{sec:conclusions}
A number of numerical simulations have been performed using the software code
\texttt{surf} of Emanuele Paolini.
It is based on a gradient flow with artificial viscosity starting from a triangulated
surface having the required topology.
It does not use the setting based on coverings of the present paper, however it gives a
consistent result provided that:
(i) the starting surface is the set $\projectionofjumpofu$ of
some $u \in \domainF$; (ii) there is no change of topology; (iii) there is no touching
of the invisible wires (they are not modelled by \texttt{surf}).
Figures \ref{fig:surfacefilmskew}, \ref{fig:surfacefilm} and \ref{fig:surfacefilmc} have
all been obtained by starting from suitable faceted initial surfaces, with the geometry
corresponding to the choice $h = 3.5$; for example the result shown in Figure
\ref{fig:surfacefilm} is obtained by starting from the faceted surface displayed
in Figure \ref{fig:surface2}.

Numerically it turns out that with $h=3.5$ the area of the non-simply connected
minimizer is slightly
greater than the area of the conelike configuration; on the contrary
increasing $h$ to (e.g.) $h=4$ results in a non-simply connected 
film surface that numerically beats
the conelike configuration, consistently with the results of Section \ref{sec:comparison}.

Decreasing $h$ changes the minimizing evolution drastically: after a (large) number of
gradient flow iterations, the film surface loses its symmetry (due to roundoff errors that
break the symmetry of the problem) and one of the two 
tunnels shrinks at the expense
of the other.
The numerical evolution stops when the smaller hole completely closes, since the software
cannot cope with changes of topology.
Evolution after such singularization time depends on how the topology is modified.
However it should be noted that the evolution would in this case typically impact with
one of the invisible wires before the singularization time.

It is conceivable that for this value of $h$ the evolution would produce a stationary
surface, that is area-minimizing
among surfaces that are forced to have the same symmetries
of the boundary frame.

Decreasing $h$ even further, in particular towards the value $h = \frac{\sqrt{2}}{2}$ that results in a regular
tetrahedron, numerically produces an evolution where the two tunnels both shrink more or less
selfsimilarly, so that we expect in the limit to obtain the area-minimizing cone
of Figure \ref{fig:morgan} left.

On the other side we can explore what happens with ever increasing values of $h$ (and a fixed sufficiently
small value of $s$).
Figure \ref{fig:surfacefilmhlarge} shows the numerical solution for $h = 20$, where the $x$ coordinate has been
shrunk down in order to fit the same frame $\Wedge$ of Figure \ref{fig:surface2}.
The resemblance of the result with the constructed surface $\particularsurface$
of Figure \ref{fig:surface2} is striking and
suggest to conjecture that a minimizer $\superficieminima \in \superficiminime(\basehs)$,
when rescaled appropriately in the direction
orthogonal to the short sides in order to have a fixed boundary, converges to $\particularsurface$ as
$h \to +\infty$.
This fact can be also motivated by observing that the area of a surface that is deformed by scaling
of a factor $k$ in the $x$ direction can be computed by using an anisotropic version of the area functional
$\int \phi(\nu) ~d \HHH^2$ with $\phi$ a positive one-homogeneous function having
as unit ball the set $\{k^2 x^2 + y^2 + z^2 \leq 1\}$, and $\nu$ 
a unit normal vector field.
With increasing values of $k$ the ``vertical'' portions of a surface (those with local constant $x$
coordinate) pay less and less in anisotropic area and we expect that in the limit $k \to +\infty$
the anisotropic area to simply be given by the integral in $x$ of the $\HHH^1$ measure of the
sections of the rescaled surface with vertical planes
parallel to the $yz$ coordinate plane, so that a minimizer can be obtained by separately minimizing
the size of each section.
This would essentially lead to the faceted surface $\particularsurface$ of Figure \ref{fig:surface2}.

\begin{figure}
%
%
\begin{overpic}[width=0.9\textwidth]{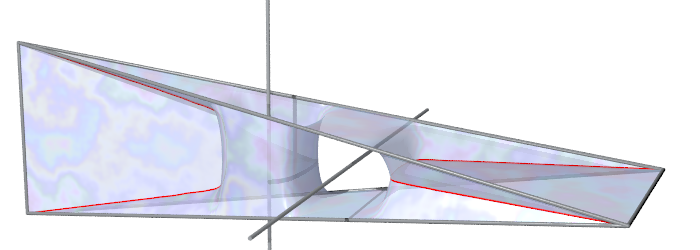}
 \put (60,30) {\large [$x$ scaled $0.175$]}
\end{overpic}
\caption{\small{Minimal film obtained for $h=20$.  The resulting surface is scaled down in the $x$ (right-left)
direction of a factor of $3.5/20$ in order to match with the frame for $h=3.5$ of Figure \ref{fig:surface2}.}}
\label{fig:surfacefilmhlarge}
\end{figure}



\end{document}